\newtheorem{thm}{Theorem}[section]
\newtheorem{prop}[thm]{Proposition}
\newtheorem{cor}[thm]{Corollary}
\newtheorem{lem}[thm]{Lemma}
\newtheorem{conj}[thm]{Conjecture}
\newtheorem{exa}[thm]{Example}
\newtheorem{question}[thm]{Question}
\newcommand{\cU}{\mathcal{U}}
\newcommand{\phib}{\overline{\phi}}
\newcommand{\tA}{\tilde{A}}
\newcommand\pair[2]{\genfrac{}{}{0pt}{1}{#1}{#2}}
\newcommand\dn[1]{\lfloor#1\rfloor}  
\newcommand\up[1]{\lceil#1\rceil}  
\newcommand\Dn[1]{\left\lfloor#1\right\rfloor}
\newcommand\Up[1]{\left\lceil#1\right\rceil}
\newcommand{\cIb}{\ol{\cI}}
\newcommand{\cUb}{\ol{\cU}}
\newcommand{\cIr}{\cI^r}
\newcommand{\cUr}{\cU^r}
\newcommand{\cIGr}{\cI^r}
\newcommand{\cUGr}{\cU^r}
\DeclareMathOperator{\cen}{cen}
\newcommand{\ben}{\begin{enumerate}}
\newcommand{\een}{\end{enumerate}}
\newcommand{\ble}{\begin{lem}}
\newcommand{\ele}{\end{lem}}
\newcommand{\bth}{\begin{thm}}
\renewcommand{\eth}{\end{thm}}
\newcommand{\bpr}{\begin{prop}}
\newcommand{\epr}{\end{prop}}
\newcommand{\bco}{\begin{cor}}
\newcommand{\eco}{\end{cor}}
\newcommand{\bcon}{\begin{conj}}
\newcommand{\econ}{\end{conj}}
\newcommand{\bde}{\begin{defn}}
\newcommand{\ede}{\end{defn}}
\newcommand{\bex}{\begin{exa}}
\newcommand{\eex}{\end{exa}}
\newcommand{\barr}{\begin{array}}
\newcommand{\earr}{\end{array}}
\newcommand{\btab}{\begin{tabular}}
\newcommand{\etab}{\end{tabular}}
\newcommand{\beq}{\begin{equation}}
\newcommand{\eeq}{\end{equation}}
\newcommand{\bea}{\begin{eqnarray*}}
\newcommand{\eea}{\end{eqnarray*}}
\newcommand{\bal}{\begin{align*}}
\newcommand{\bce}{\begin{center}}
\newcommand{\ece}{\end{center}}
\newcommand{\bpi}{\begin{picture}}
\newcommand{\epi}{\end{picture}}
\newcommand{\bpp}{\begin{picture}}
\newcommand{\epp}{\end{picture}}
\newcommand{\bfi}{\begin{figure} \begin{center}}
\newcommand{\efi}{\end{center} \end{figure}}
\newcommand{\bprf}{\begin{proof}}
\newcommand{\eprf}{\end{proof}\medskip}
\newcommand{\capt}{\caption}
\newcommand{\bsl}{\begin{slide}{}}
\newcommand{\esl}{\end{slide}}
\newcommand{\bfr}{\begin{frame}}
\newcommand{\efr}{\end{frame}}
\newcommand{\comp}{\models}
\newcommand{\hqed}{\hfill \qed}
\newcommand{\ol}{\overline}
\newcommand{\hso}[1]{\hspace{-1pt}}
\newcommand{\qmq}[1]{\quad\mbox{#1}\quad}
\newcommand{\lt}{\lhd}
\newcommand{\gt}{\rhd}
\newcommand{\lte}{\unlhd}
\newcommand{\fl}[1]{\lfloor #1 \rfloor}
\newcommand{\ce}[1]{\lceil #1 \rceil}
\def\<{\langle}
\def\>{\rangle}
\newcommand{\spn}[1]{\langle{#1}\rangle}
\newcommand{\ra}{\rightarrow}
\newcommand{\al}{\alpha}
\newcommand{\be}{\beta}
\newcommand{\de}{\delta}
\newcommand{\1}{{\bf 1}}
\newcommand{\cI}{{\cal I}}
\newcommand{\cO}{{\cal O}}
\newcommand{\Fb}{\ol{F}}
\newcommand{\Phib}{\ol{\Phi}}
\newcommand{\rb}{\ol{r}}
\def\multiset#1#2{\ensuremath{\left(\kern-.3em\left(\genfrac{}{}{0pt}{}{#1}{#2}\right)\kern-.3em\right)}}
\DeclareMathOperator{\rk}{rk}
\DeclareMathOperator{\st}{st}
\begin{document}
\pagestyle{plain}

\title{Partial rank symmetry of distributive lattices for fences
}
\author{
Sergi Elizalde\\[-5pt]
\small Department of Mathematics,
Dartmouth College,\\[-5pt]
\small Hanover, NH 03755-3551, USA,
\texttt{sergi.elizalde@dartmouth.edu}\\
Bruce E. Sagan\\[-5pt]
\small Department of Mathematics, Michigan State University,\\[-5pt]
\small East Lansing, MI 48824-1027, USA, {\tt sagan@math.msu.edu}
}

\maketitle

\begin{abstract}
Associated with any composition $\be=(a,b,\ldots)$ is a corresponding fence poset $F(\be)$ whose covering relations are
$$
x_1\lt x_2 \lt \ldots\lt x_{a+1}\gt x_{a+2}\gt \ldots\gt x_{a+b+1}\lt x_{a+b+2}\lt \ldots\ .
$$
The distributive lattice $L(\be)$ of all lower order ideals of $F(\be)$ is important in the theory of cluster algebras. In addition, its rank generating function $r(q;\be)$ is used to define $q$-analogues of rational numbers.  O\u{g}uz and Ravichandran recently showed that its coefficients satisfy an interlacing condition, proving a conjecture of McConville, Smyth and Sagan, which in turn implies a previous conjecture of Morier-Genoud and Ovsienko that $r(q;\be)$ is unimodal.  We show that, when $\be$ has an odd number of parts, then the polynomial is also partially symmetric: the number of ideals of $F(\be)$ of size $k$ equals the number of filters of size $k$, when $k$ is below a certain value.  Our proof is completely bijective.  O\u{g}uz and Ravichandran also introduced a circular version of fences and proved, using algebraic techniques, that the distributive lattice for such a poset is rank symmetric.  We give a bijective proof of this result as well.
We end with some questions and conjectures raised by this work.
\end{abstract}

\begin{flushleft}
	\small {\bf Keywords:} bottom heavy, bottom interlacing, distributive lattice, fence, gate, order ideal, poset, rank, symmetric, unimodal, top heavy, top interlacing           \\[5pt]
	\small {\bf 2020 AMS subject classification:}  06A07  (Primary) 05A15, 05A19, 05A20, 06D05  (Secondary)
\end{flushleft}


\section{Introduction}

We will be studying the rank sequences for distributive lattices of certain partially ordered sets (posets) called fences, defined as follows.
Any terms or notation from the theory of posets which are not defined here can be found in the texts of Sagan~\cite{sag:aoc} or Stanley~\cite{sta:ec1}.
A {\em chain} of length $l$ is a totally ordered set with $l+1$ elements.
A {\em composition of $m$} is a sequence $\be=(\be_1,\be_2,\ldots,\be_s)$ of positive integers, called {\em parts}, with $\sum_i\be_i = m$.  We write $\be\comp m$.  The corresponding fence $F(\be)$ is obtained by taking chains $S_i$ of length $\be_i$ for $1\le i\le s$ and identifying the maximal (respectively, minimal) elements of $S_i$ and $S_{i+1}$ for $i$ odd (respectively, even).  As an example, the fence $F(2,4,1)$  is displayed in Figure~\ref{F(2,4,1)}.  Placing the chains $S_1,S_2,\dots,S_s$ from left to right as in the figure, we label the elements of $F(\be)$ as $x_1,x_2,\ldots,x_n$ from left to right. We say that $S_i$ is an {\em ascending} or {\em descending} segment of $F(\be)$ depending on whether $i$ is odd or even, respectively.
Note that if $\#F(\be)=n$, where the hash tag denotes cardinality, then $\be\comp n-1$.

\bfi
\begin{tikzpicture}[scale=.8]
\fill(0,1) circle(.1);
\fill(1,2) circle(.1);
\fill(2,3) circle(.1);
\fill(3,2) circle(.1);
\fill(4,1) circle(.1);
\fill(5,0) circle(.1);
\fill(6,-1) circle(.1);
\fill(7,0) circle(.1);
\draw (0,1)--(2,3)--(6,-1)--(7,0);
\draw (0,.5) node{$x_1$};
\draw (1,1.5) node{$x_2$};
\draw (2,2.5) node{$x_3$};
\draw (3,1.5) node{$x_4$};
\draw (4,.5) node{$x_5$};
\draw (5,-.5) node{$x_6$};
\draw (6,-1.5) node{$x_7$};
\draw (7,-.5) node{$x_8$};
\end{tikzpicture}
\capt{The fence $F(2,4,1)$ \label{F(2,4,1)}}
\efi

Let $L(\be)$ be the distributive lattice of lower order ideals of $F(\be)$.  These lattices can be used to compute mutations in an associated cluster algebra on a surface with marked points.  In fact there are (at least) six methods for doing so, 
see~\cite{cla:epp,pro:cfp,sch:cef,sch:caa,ST:caa,yur:cef,yur:cce}.
Since $L(\be)$ is ranked, it has an associated {\em rank sequence} $r(\be):r_0,r_1,\ldots,r_n$ where
$$
r_k = \text{number of elements at rank $k$ in $L(\be)$}.
$$
for $0\le k\le n$.  The corresponding rank generating functions
$$
r(q;\be) = \sum_{k=0}^n r_k q^k.
$$
were used by Morier-Genoud and Ovsienko to define $q$-analogues of rational, and even real, numbers~\cite{MGO:qdr}.
For example, for the fence $F(\beta)$ with $\beta=(6,2,1,2,3,1,6)$ (see Figure~\ref{Phi-example}), the rank generating function is
\begin{align*}
r(q;\be) & = 
1+4q+11{q}^{2}+23{q}^{3}+41{q}^{4}+65{q}^{5}+93{q}^{6}+121{q}^{7}+146{q}^{8}+163{q}^{9}+170{q}^{10}\\
&\quad +165{q}^{11}+147{q}^{12}+122{q}^{13}+93{q}^{14}+65{q}^{15}+41{q}^{16}+23{q}^{17}+11{q}^{18}+4{q}^{19}+{q}^{20}.
\end{align*}

Two well-studied properties of sequences $b:b_0,b_1,\ldots,b_n$ are as follows.  Call the sequence {\em symmetric} if
$$
b_k = b_{n-k}
$$
for $0\le k\le n$.   The sequence is said to be {\em unimodal}  if there is an index $m$ such that
$$
b_0\le b_1\le \ldots \le b_m \ge b_{m+1} \ge \ldots \ge b_n.
$$
Sequences satisfying these properties abound in combinatorics, algebra, and geometry.  See the survey articles of Stanley~\cite{sta:lus}, Brenti~\cite{bre:lus}, or Br\"and\'en~\cite{bra:ulr} for examples.
  In their previously cited paper, Morier-Genoud and Ovsienko made the following conjecture which has now been proved, as we will discuss shortly.
\begin{conj}[\cite{MGO:qdr}]
\label{MGO}
For all $\be$, the sequence $r(\be)$ is unimodal.
\end{conj}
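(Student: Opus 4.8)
The plan is to obtain Conjecture~\ref{MGO} as a corollary of a stronger \emph{interlacing} property of the rank sequence, and to prove that property combinatorially. Write $n=\#F(\be)$ and $r(\be):r_0,\dots,r_n$. Since complementation $I\mapsto F(\be)\sm I$ is a bijection between lower order ideals of size $k$ and filters (upper order ideals) of size $n-k$, the number of filters of size $k$ is $r_{n-k}$. Call the sequence \emph{top interlacing} if
\[
r_k\le r_{n-k}\le r_{k+1}\qquad (0\le k<n/2),
\]
and \emph{bottom interlacing} if all the inequalities are reversed. Either condition forces unimodality: the inner inequalities give $r_k\le r_{k+1}$ for $k<n/2$, so the sequence rises to the middle, while combining $r_{n-k}\le r_{k+1}$ with $r_{k+1}\le r_{n-k-1}$ gives $r_{n-k}\le r_{n-k-1}$, so it falls after the middle, with peak near $\lfloor n/2\rfloor$. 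Thus it suffices to show that, for every $\be$, the sequence $r(\be)$ is top interlacing or bottom interlacing.

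To prove interlacing I would recast it, via the complementation bijection above, as the two chains of inequalities
\[
\#\{\text{ideals of size }k\}\ \le\ \#\{\text{filters of size }k\}\ \le\ \#\{\text{ideals of size }k+1\}
\]
(in the top-heavy case), and construct an injection realizing each step. The first step compares ideals and filters of the \emph{same} size $k$, and in the low range it should in fact be a bijection -- this is exactly the partial rank symmetry $r_k=r_{n-k}$ -- so the real content lies near the middle and in the second, genuinely asymmetric, step. To build the maps concretely I would encode a lower order ideal by the vector recording how high up each segment $S_i$ it climbs, subject to the compatibility conditions at the shared maximal and minimal elements; ideals of a fixed size then become integer points with a fixed coordinate sum, on which one can define canonical height-raising and height-lowering moves and check injectivity segment by segment.

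The main obstacle I anticipate is the behaviour near the central ranks, precisely where the sequence ceases to be symmetric: in the example $\be=(6,2,1,2,3,1,6)$ one has $r_k=r_{n-k}$ for $k\le 6$ but $r_9=163<165=r_{11}$. Away from the centre a size-changing move can be localized to a single segment and easily inverted, but near the centre several segments compete to supply the element being added or removed, and the injection must choose among them in a consistent, collision-free way; controlling this choice is the crux of the argument. A convenient way to organize the bookkeeping is to translate ideals into lattice paths or tilings attached to the fence and to phrase each move as a local rewrite, so that injectivity reduces to a finite local check; the remaining difficulty -- and the step I expect to demand the most care -- is to assemble these local maps into one global injection compatible with the interlacing order of the ranks, and to pin down from the coarse shape of $\be$ (the parity of its number of parts and the orientations of its end segments) whether the sequence is top- or bottom-interlacing.
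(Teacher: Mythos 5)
Your reduction is sound as far as it goes: top or bottom interlacing does imply unimodality by exactly the two-line argument you give, and this is precisely how unimodality is obtained in the literature --- Conjecture~\ref{MGO} follows from the interlacing statement of Theorem~\ref{heavy}. Your ``low range'' bijection between ideals and filters of the same size $k$ is also a real theorem (it is Theorem~\ref{main} of this paper, proved via the map $\Phi$), so that piece of your plan is viable.

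The genuine gap is that you never prove the interlacing property; everything after the reduction is a research plan with the hard part explicitly deferred. The two injections you would need --- ideals of size $k$ into filters of size $k$, and filters of size $k$ into ideals of size $k+1$ (or the reverses, depending on the shape of $\be$) --- are exactly where the content lies, and you acknowledge that near the central ranks, where the sequence stops being symmetric, you do not know how to make the ``canonical height-raising moves'' collision-free or how to decide from $\be$ which direction the interlacing goes. This is not a routine detail: the paper poses precisely this task (``Is it possible to give an injective proof of Theorem~\ref{heavy} using a variant of $\Phi$?'') as an open question in Section~\ref{coq}, and the only known proof of Theorem~\ref{heavy}, due to O\u{g}uz and Ravichandran, is not bijective at all --- it proceeds by induction on the composition together with algebraic manipulation of recurrences for the rank polynomials, with the trichotomy of Theorem~\ref{heavy}(c) (governed by comparing $\be_1$ with $\be_s$ and recursing on $\be'=(\be_2,\dots,\be_{s-1})$) carried along through the induction. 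One further small point: you assert that every $r(\be)$ is top or bottom interlacing, but case (c)(iii) of Theorem~\ref{heavy} can yield a sequence that is merely symmetric, so a complete argument must also handle that branch (in the actual proof it is controlled inductively). As written, your proposal establishes the implication ``interlacing $\Rightarrow$ unimodal'' but not the conjecture itself.
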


It is not true that $r(\be)$ is always symmetric.  For example, when $\be=(1,1)$ we have $r(\be):1,2,1,1$.  However, there are other recently studied properties of sequences~\cite{ath:esl,ath:gcg,BJM:hpz,BS:sdr,SV:uqi,sol:sns}
which are satisfied by $r(\be)$.
Call a sequence $b:b_0,b_1,\ldots,b_n$ {\em top heavy} if
$$
b_k\le b_{n-k}
$$
for $0\le k < \fl{n/2}$, where $\fl{\cdot}$ is the floor (round-down) function.  Dually, the sequence is {\em bottom heavy} if
$$
b_k\ge b_{n-k}
$$
for $0\le k < \fl{n/2}$.  Call the sequence {\em top interlacing} if
$$
b_0\le b_n \le b_1 \le b_{n-1} \le \ldots \le b_{\ce{n/2}},
$$
where $\ce{\cdot}$ is the ceiling (round-up) function.  Top interlacing clearly implies top heavy, and it also gives unimodality, since the inequalities imply that the sequence is increasing up to $b_{\ce{n/2}}$ and decreasing from $b_{\ce{n/2}}$ onward.  Some papers use the term ``alternately increasing,"  but we prefer ``top interlacing'' because it emphasizes how the first and second halves of the sequence interlace.
Similarly, define a sequence to be 
{\em bottom interlacing} if
$$
b_n\le b_0 \le b_{n-1} \le b_1 \le \ldots \le b_{\fl{n/2}}.
$$
As before, bottom interlacing implies both bottom heavy and unimodal.
McConville, Sagan, and Smyth~\cite{MSS:ruc} conjectured the following strengthening of Conjecture~\ref{MGO}, which has has recently been proved by  
O\u{g}uz and Ravichandran~\cite{OR:rpf} using induction and algebraic techniques.

\begin{thm}[\cite{OR:rpf}]
\label{heavy}
Let $\be=(\be_1,\ldots,\be_s)$.
\ben
\item[(a)]  If $s=1$ then $r(\be) = (1,1,\ldots,1)$.
\item[(b)]  If $s$ is even, then $r(\be)$ is bottom interlacing.
\item[(c)]  Suppose $s\ge3$ is odd and let $\be'=(\be_2,\ldots,\be_{s-1})$.
	\ben
	\item[(i)] If $\be_1>\be_s$ then $r(\be)$ is bottom interlacing.
	\item[(ii)] If $\be_1<\be_s$ then $r(\be)$ is top interlacing.
	\item[(iii)] If $\be_1=\be_s$ then $r(\be)$ is symmetric, bottom interlacing, or top interlacing depending on whether 
	$r(\be')$ is symmetric, top interlacing, or bottom interlacing, respectively.
	\een
\een
\end{thm}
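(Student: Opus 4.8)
The plan is to prove the theorem by induction on the number of parts $s$, using a transfer-matrix recursion that appends one segment at a time together with a reformulation of the three target properties as interleaving inequalities between ideal-counts and filter-counts. For the engine, scan $F(\be)$ from left to right and track the pair $(N,Y)$, where $N=N(q)$ (resp.\ $Y=Y(q)$) is the size generating function for lower ideals that exclude (resp.\ include) the current rightmost vertex, so that $r(q;\be)=N+Y$. A short case analysis (an ideal meets each new chain in an up-set of that chain) shows that appending an ascending segment of length $a$ multiplies $\binom{N}{Y}$ by $U_a=\left(\begin{smallmatrix}1 & 1+q+\cdots+q^{a-1}\\ 0 & q^{a}\end{smallmatrix}\right)$, while appending a descending segment of length $b$ multiplies by $D_b=\left(\begin{smallmatrix}1 & 0\\ q+q^2+\cdots+q^{b} & q^{b}\end{smallmatrix}\right)$, starting from $\binom{N}{Y}=\binom{1}{q}$. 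This makes (a) immediate, since a single factor $U_{\be_1}$ gives $r=1+q+\cdots+q^{\be_1}$, and it shows that appending a segment flips the parity of $s$, which is exactly the bridge between (b) and (c).

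Next I would recast the goal using duality. Complementation $I\mapsto F(\be)\setminus I$ shows that reversing the rank sequence corresponds to the dual poset, i.e.\ $q^{n}r(1/q;\be)=r(q;F(\be)^{*})$. For odd $s$ the dual is again a fence, namely $F(\be^{r})$ with $\be^{r}=(\be_s,\ldots,\be_1)$ (a $180^\circ$ rotation), so $r_{n-k}(\be)=r_k(\be^{r})=:f_k$, the number of filters of $F(\be)$ of size $k$; for even $s$ the dual instead has two \emph{maximal} endpoints, which is the structural reason such fences are unconditionally bottom heavy. Under this dictionary, for odd $s$ the condition (c)(i) of being bottom interlacing becomes the clean interleaving inequalities $f_k\le r_k\le f_{k+1}$, condition (c)(ii) becomes $r_k\le f_k\le r_{k+1}$, and (c)(iii) symmetric becomes $r_k=f_k$. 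Equivalently, writing $r(q;\be)=a(q)+q\,b(q)$ in its symmetric decomposition, with $b(q)=\bigl(r(q;\be)-r(q;\be^{r})\bigr)/(q-1)$, the whole trichotomy is governed by the sign of $b$ (nonpositive, nonnegative, or zero), together with unimodality of $a$ and of $|b|$.

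For the inductive step I would peel the first and last segments. When $s$ is odd both ends are ascending, so $r(q;\be)=(1,1)\,U_{\be_s}\,\mathbf{M}\,U_{\be_1}\binom{1}{q}$, where the middle product $\mathbf{M}$ is obtained from the product computing $r(q;\be')$ by the swap $U_a\leftrightarrow D_a$; this swap realizes the internal duality that makes the middle of $F(\be)$ an upside-down copy of $F(\be')$, and it is the source of the top$\leftrightarrow$bottom interchange in (c)(iii). The boundary factors $U_{\be_1},U_{\be_s}$ contribute the palindromes $1+q+\cdots+q^{\be_1}$ and $1+q+\cdots+q^{\be_s}$, whose difference controls $b_\be$: if $\be_1>\be_s$ the boundary forces $b_\be\le0$ (giving (i)), if $\be_1<\be_s$ it forces $b_\be\ge0$ (giving (ii)), and if $\be_1=\be_s$ the boundary contribution cancels and $b_\be$ becomes a palindromic multiple of $-b_{\be'}$, flipping the sign exactly as (iii) predicts. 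For even $s$ I would peel only the last (descending) segment, reducing to the odd fence $(\be_1,\ldots,\be_{s-1})$ via $r(q;\be)=(1+q+\cdots+q^{\be_s})N+q^{\be_s}Y$; the newly created minimal endpoint keeps $b_\be\le0$, upgrading bottom heaviness to the bottom interlacing claimed in (b).

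The main obstacle is isolating and maintaining the correct strengthened inductive hypothesis. The sum $r(q;\be)$ alone determines neither the pair $(N,Y)$ nor its relation to $\be'$, so the induction must be carried on the pair itself (equivalently, on $a$ and $b$ simultaneously), and one must verify that the operations $U_a,D_b$ — convolution with the palindromes $1+q+\cdots+q^{a-1}$ composed with the monomial shifts $q^{a}$ — preserve both the sign of $b$ and the unimodality of $a$ and $|b|$. The sign of $b_\be$ is a coefficientwise inequality that does not survive these operations for free, so this verification is the real work; it is precisely the point at which a bijective argument is cleanest, realizing the inequalities $f_k\le r_k\le f_{k+1}$ (and their reverses) by explicit size-preserving injections $\{\text{filters of size }k\}\hookrightarrow\{\text{ideals of size }k\}\hookrightarrow\{\text{filters of size }k+1\}$ built recursively from the peeling, which is the approach the remainder of the paper develops.
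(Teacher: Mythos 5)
First, a point of orientation: the paper does not actually prove Theorem~\ref{heavy}; it is imported from O\u{g}uz and Ravichandran \cite{OR:rpf}, and the present paper only proves the partial-symmetry statement of Theorem~\ref{main} and the circular-fence symmetry of Theorem~\ref{FbSym}. So there is no in-paper argument to compare yours against, and your proposal must stand on its own.

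On its own terms, your setup is sound and does match the ``induction and algebraic techniques'' the paper attributes to \cite{OR:rpf}: the transfer matrices $U_a$ and $D_b$ are correct (the case analysis for appending an ascending or descending segment checks out), part (a) follows immediately from a single factor $U_{\be_1}$, and the duality dictionary $r_{n-k}(\be)=r_k(\be^R)=f_k$ for odd $s$, together with the translation of bottom and top interlacing into $f_k\le r_k\le f_{k+1}$ and $r_k\le f_k\le r_{k+1}$, is exactly the right reformulation of the trichotomy in (c). The genuine gap is that the entire content of parts (b) and (c) is concentrated in claims you assert in the third paragraph and then explicitly defer in the fourth: that multiplying by $U_{\be_1}$, $U_{\be_s}$, or $D_{\be_s}$ ``forces'' the sign of $b_\be=(r(q;\be)-r(q;\be^R))/(q-1)$ and preserves unimodality of $a$ and $|b|$. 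As you yourself observe, $r=N+Y$ does not determine the pair $(N,Y)$, so the induction must carry a strengthened hypothesis on that pair, and coefficientwise sign and unimodality conditions are not preserved by convolution with palindromes composed with shifts for free. Nothing in the proposal identifies what that strengthened invariant is, nor verifies that $U_a$ and $D_b$ propagate it; likewise the assertion in case (c)(iii) that when $\be_1=\be_s$ the quantity $b_\be$ becomes a palindromic multiple of $-b_{\be'}$ is unproved and is essentially the whole theorem in that case. What you have is a correct reduction and a plausible plan of attack, but the inductive invariant and the verification of its propagation --- the actual work done in \cite{OR:rpf} --- are missing, so this is an outline rather than a proof.
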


The purpose of the present work is to show that, even though $r(\be)$ is not always symmetric, it exhibits at least partial symmetry if there is an odd number of segments.  In particular, our main result is as follows.
\begin{thm}
\label{main}
Let $\be=(\be_1,\be_2,\ldots,\be_s)$ where $s$ is odd and 
$r(\be):r_0,r_1,\ldots,r_n$.  For all $k\le\min\{\be_1,\be_s\}$ we have
$$
r_k = r_{n-k}.
$$
\end{thm}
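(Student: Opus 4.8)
The plan is to prove the equivalent statement that, for $k\le\min\{\be_1,\be_s\}$, the number of order ideals of $F(\be)$ of size $k$ equals the number of order filters of size $k$. This is equivalent to the theorem because complementation $I\mapsto F(\be)\setm I$ is a size-reversing bijection between ideals and filters, so the filters of size $k$ are counted by $r_{n-k}$. I would encode ideals and filters by their indicator vectors $\epsilon=(\epsilon_1,\ldots,\epsilon_n)\in\{0,1\}^n$. Being a down-set forces $\epsilon$ to be nonincreasing along each ascending segment and nondecreasing along each descending segment, so the $1$'s of an ideal accumulate at the bottoms (valleys); dually, the $1$'s of a filter accumulate at the tops (peaks). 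The first observation is that when $k\le\min\{\be_1,\be_s\}$ the two extreme ascending segments $S_1$ and $S_s$ can never be completely filled, so an ideal of size $k$ misses the extreme peaks (the top of $S_1$ and $x_n$), while a filter of size $k$ misses the extreme valleys ($x_1$ and the bottom of $S_s$).

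The bijection itself I would build from the map that \emph{reflects the filling within each ascending segment}: on the segment $S_{2j+1}$, which runs from a valley at its bottom to a peak at its top, replace the block of $a$ ones sitting at the bottom by a block of $a$ ones sitting at the top. Segment by segment this turns an ideal pattern ($1$'s low) into a filter pattern ($1$'s high) and preserves the number of $1$'s, hence the size. I would then check directly that, as long as no interior peak lies in the ideal, the values assigned to the shared peak and valley vertices by neighbouring segments agree, so that the output is a genuine filter; in this range the map is visibly its own inverse, giving a bijection between ideals containing no peak and filters containing no valley.

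The hard part is the ideals that do contain an interior peak $P_j$: such an ideal fills the entire tent $S_{2j-1}\cup S_{2j}$ around $P_j$, and then the naive per-segment reflection produces conflicting values at $P_j$ and fails to be a filter (for instance, in $F(3,1,1,1,4)$ the ideal $\{x_5,x_6,x_7\}$ must be matched with the filter $\{x_4,x_5,x_6\}$, not with the output of the reflection). To handle these I would argue by induction, peeling off a full tent and matching peak-occupied ideals with valley-occupied filters of the reduced configuration obtained by stripping the end segments, in the spirit of the decomposition $\be'=(\be_2,\ldots,\be_{s-1})$ appearing in Theorem~\ref{heavy}(c). The crucial point is that, because $S_1$ and $S_s$ are never full, the corrections needed to repair the conflicts always have room to propagate toward an end segment and be absorbed there, which both keeps the procedure well defined and supplies the inverse. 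I expect this interior-peak bookkeeping---showing that the repaired map is globally consistent and bijective, with the hypothesis $k\le\min\{\be_1,\be_s\}$ guaranteeing exactly the reservoir of empty space at the two ends---to be the main obstacle. The failure of symmetry already at $k=\min\{\be_1,\be_s\}+1$ (for example $r_3=3\ne4=r_4$ for $\be=(2,1,3)$) shows that this hypothesis cannot be relaxed.
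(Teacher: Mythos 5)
Your high-level strategy coincides with the paper's (replace $r_{n-k}$ by the number of filters of size $k$, record how much of each segment an ideal occupies, and build an explicit size-preserving bijection), but the map you propose does not work, and the part you defer is where essentially all of the content lies. The ``easy case'' is already broken: segmentwise reflection is not consistent at the shared vertices and need not produce a filter even when the ideal contains no peak. Take $\be=(2,1,2)$, so $x_1\lt x_2\lt x_3\gt x_4\lt x_5\lt x_6$, and the ideal $I=\{x_4,x_5\}$ with $k=2\le\min\{\be_1,\be_3\}$. No peak lies in $I$, yet reflecting within the descending segment $\{x_3,x_4\}$ places a $1$ on $x_3$ while reflecting within $\{x_1,x_2,x_3\}$ places a $0$ there, and the union $\{x_3,x_5,x_6\}$ has the wrong size; no convention for resolving the clash (union, intersection, or ignoring descending segments) repairs all three size-$2$ ideals simultaneously. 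The underlying reason is that the defining conditions for ideals and filters do not match up segment by segment: an ideal may occupy the bottom of an ascending segment while the descending segment to its \emph{left} is empty, but a filter occupying the top of that ascending segment forces the peak of the descending segment to its left to be present (compare \ref{IF4} with \ref{UF4}). Repairing this requires transferring elements between adjacent segments --- precisely steps \ref{PH1} and \ref{PH3} of the paper --- and, more seriously, the induced map on the occupation numbers of the descending segments cannot be a reflection at all: the paper's $\phi$ of Theorem~\ref{thm:phi} is a genuinely global rearrangement that moves the ``trailing ones'' of each block past the following zero and shifts a unit from the right end of each block to its left end.

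The case you yourself flag as ``the main obstacle'' --- ideals containing an interior peak --- is nonempty in the range $k\le\min\{\be_1,\be_s\}$ (your own example $\{x_5,x_6,x_7\}$ in $F(3,1,1,1,4)$), and your proposed treatment of it is a hope rather than an argument: such an ideal need not interact with the end segments at all, so it is unclear why ``stripping the end segments'' or ``propagating corrections to the ends'' would match it with a valley-containing filter, and no bijection between peak-containing ideals of $F(\be)$ and ideals of a reduced fence is actually exhibited (nor is it checked that the two sides of such a reduction are equinumerous size by size). As written, the proposal establishes neither half of the desired bijection, so there is a genuine gap.
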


\bfi
\begin{tikzpicture}[scale=.8]
\fill(0,2) circle(.1);
\fill(1,3) circle(.1);
\fill(2,0) circle(.1);
\fill(2,2) circle(.1);
\fill(3,3) circle(.1);
\fill(4,2) circle(.1);
\draw (0,2)--(1,3)--(2,2)--(3,3)--(4,2)--(2,0)--(0,2);
\draw (0,1.5) node{$x_2$};
\draw (1,2.5) node{$x_3$};
\draw (2,1.5) node{$x_4$};
\draw (3,2.5) node{$x_5$};
\draw (4,1.5) node{$x_6$};
\draw (2,-.5) node{$x_1= x_7$};
\end{tikzpicture}
\capt{The circular fence $\Fb(2,1,1,2)$ \label{FbFig}}
\efi

O\u{g}uz and Ravichandran's proof of Theorem~\ref{heavy} relied on certain posets obtained by making the Hasse diagram of a fence into a cycle.
Let $\be=(\be_1,\be_2,\ldots,\be_{2\ell})\comp n$ be a composition with an even number of parts, so that the fence $F(\be)$ has $n+1$ elements $x_1,\ldots,x_{n+1}$, begins with an ascending segment, and ends with a descending segment.
Define the corresponding {\em circular fence} to be the poset $\Fb(\be)$ with $n$ elements obtained by identifying $x_1$ and $x_{n+1}$.  For example, $\Fb(2,1,1,2)$ is displayed in Figure~\ref{FbFig}.  Denote the rank sequence of $\Fb(\be)$ by
$\rb(\be)$.  Using algebraic manipulation of recurrence relations, O\u{g}uz and Ravichandran proved the following result, and left finding a bijective proof as an open problem.
\begin{thm}[\cite{OR:rpf}]
\label{FbSym}
Let $\be=(\be_1,\be_2,\ldots,\be_{s})$ where $s$ is even. Then $\rb(\be)$ is symmetric.
\end{thm}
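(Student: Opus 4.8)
The plan is to deduce the palindromic property of $\rb(\be)$ from a single size-preserving bijection between the order ideals and the order filters (up-sets) of $\Fb(\be)$. Write $\mathrm{Id}_j$ and $\mathrm{Fi}_j$ for the numbers of ideals, respectively filters, of $\Fb(\be)$ of cardinality $j$, and set $n=\#\Fb(\be)$. Taking complements, $I\mapsto \Fb(\be)\setminus I$ is a bijection from ideals of size $j$ to filters of size $n-j$, so $\mathrm{Id}_j=\mathrm{Fi}_{n-j}$ for all $j$. Hence if one produces a bijection sending each ideal to a filter of the \emph{same} cardinality, i.e. $\mathrm{Id}_j=\mathrm{Fi}_j$, then $\rb_{n-j}=\mathrm{Id}_{n-j}=\mathrm{Fi}_j=\mathrm{Id}_j=\rb_j$, which is exactly the claimed symmetry. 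A bijective proof of Theorem~\ref{FbSym} therefore amounts to the single concrete task of matching each ideal of $\Fb(\be)$ with a filter of equal size. (Logically this is equivalent to the theorem, via the complement map, but it replaces an equality of numbers by an explicit combinatorial construction, which is what a bijective proof should supply.)

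To build such a bijection I would work with the arc description of ideals. Color the cyclic Hasse diagram by membership in $I$; along every covering edge the lower element must be colored if the upper one is, so the colored elements split into maximal runs (``IN-arcs'') each of which is nested about some valley of $\Fb(\be)$ — at both of its endpoints the outgoing edge rises to an uncolored element — while the uncolored runs are nested about peaks. A filter has exactly the mirror-image structure, its colored runs being nested about peaks. The natural candidate map is a \emph{rotation of the cyclic fence by one segment}: flipping $\Fb(\be)$ upside down carries it to the circular fence $\Fb(\si\be)$ of the one-step cyclic shift $\si\be=(\be_2,\be_3,\ldots,\be_s,\be_1)$ (each ascending segment becomes descending and conversely, and re-reading the cycle from the first ascending segment recovers $\si\be$), so a size-preserving ideal-to-filter map is precisely a device that reseats each valley-based IN-arc as a peak-based IN-arc of the same length, keeping the total cardinality fixed.

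The main obstacle is that this reseating is not an automorphism: the ascending and descending segments flanking a given valley have different lengths from those flanking the neighboring peak, so an arc of a prescribed length must be transplanted into a segment pair of possibly different capacity without overflowing, and one must verify that the resulting correspondence is well defined in both directions. I expect to control this by cutting the cycle open at a fixed valley, reducing to the linear fence $F(\be)$ where the ideal-matching machinery developed for Theorem~\ref{main} already applies, and then checking that the valley/peak endpoint data glue back consistently after the one-segment shift. The delicate point will be bookkeeping for the two boundary elements created by the cut, so that exactly the ideals respecting the identification $x_1=x_{n+1}$ are matched and the arc lengths are preserved; proving that the reseating rule is an involution-free bijection at these seams is where the real work lies.
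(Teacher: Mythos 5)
Your opening reduction is correct and is exactly the paper's framing: since complementation gives $\mathrm{Id}_j=\mathrm{Fi}_{n-j}$, the theorem is equivalent to producing a cardinality-preserving bijection $\cIb(\be)\ra\cUb(\be)$, and this is precisely the map $\Phib$ the paper constructs. The problem is that your proposal stops at the point where the actual mathematics begins. The ``reseat each valley-based arc as a peak-based arc of the same length'' rule is not a construction; as you yourself note, the flanking segments of a valley and of the adjacent peak have different capacities, so a naive transplant can overflow, and specifying \emph{how} to redistribute the excess is the entire content of the proof. The paper resolves this with a concrete combinatorial surgery on the sequence $d_1,\ldots,d_\ell$ of descending-segment intersection sizes: decompose into blocks of positive entries, slide each block's factor of trailing ones past the zero to its right, and transfer one unit from the right end of each remaining block to its left end (steps P1--P2, adapted to the circular setting as $\phib$), preceded and followed by local adjustments trading units between ascending and descending segments (PHC1 and PHC3). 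Verifying that the output satisfies the filter conditions and that the map inverts (via reversal) is a genuine, nontrivial case analysis that your sketch does not supply.

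There is also a structural gap in your ``cut the cycle open at a fixed valley and reduce to the linear case'' plan: a valid cut point need not exist. In the paper's terms, the circular sequence of $d_i$'s can only be split into linear factors at indices $i$ where $a_i<\al_i-1$; when every ascending segment is fully contained in the ideal (all $a_i=\al_i-1$), no seam is available and the map must act on a genuinely circular sequence. The paper handles this separately (the map $\phib$ on circular sequences, with the further subcase where all $d_i>0$, in which $\Phib$ degenerates to the identity on encodings). Your proposal does not acknowledge this case, and any cut-open argument will fail on it. So while your strategy points in the right direction, the proof as proposed is incomplete: the key bijection is asserted rather than constructed, and one essential case is unaddressed.
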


The rest of this paper will be structured as follows.
In the next section, we will present a totally bijective proof of Theorem~\ref{main}. 
Section~\ref{psc} will be devoted to showing that our bijection can be used, with minor modifications, to prove Theorem~\ref{FbSym} as well.  
We will end with a section of comments and open questions.


\section{Proof of partial symmetry for fences}
\label{pps}

In order to give our bijective proof of Theorem~\ref{main}, we will need some definitions and notation.  In a poset, an {\em ideal} will always be a lower order ideal.  We will also use the terms {\em upper order ideal} and {\em filter}
interchangeably. 
Consider a composition $\be=(\be_1,\be_2,\ldots,\be_{2\ell+1})$ with an odd number of parts.
For a fence $F(\be)$ and $k\ge0$, we let
$$
\cI_k(\be) =\{I \mid \text{$I$ is a lower order ideal of $F(\be)$ with $\#I=k$}\}
$$
and
$$
\cU_k(\be) =\{U \mid \text{$U$ is an upper order ideal of $F(\be)$ with $\#U=k$}\}.
$$
To prove Theorem~\ref{main}, it suffices to construct a bijection $\Phi:\cI_k(\be)\ra\cU_k(\be)$ for all $k\le \min\{\be_1,\be_{2\ell+1}\}$.  This is because, with the notation of the theorem, we have $\#\cI_k(\be)=r_k$ and $\#\cU_k(\be)=r_{n-k}$.

\bfi
\begin{tikzpicture}[scale=.8]
\foreach \position in {(0,4), (1,3), (2,2), (3,3), (4,2), (5,1), (6,0), (7,1), (8,0)}
   \fill\position circle(.1);
\draw (0,4)--(2,2)--(3,3)--(6,0)--(7,1)--(8,0);
\end{tikzpicture}
\capt{The gate $G(2,3,1)$. \label{G(2,3,1)}}
\efi

\subsection{Bijection $\phi$ for gates}

To define $\Phi$, we will first construct a bijective map $\phi$ on certain ideals of a particular subposet of a fence obtained by removing the first and last segments, and requiring ascending segments to have length one.
For an arbitrary composition $\de=(\de_1,\de_2,\ldots,\de_{\ell})$, let the corresponding {\em gate} be
$$
G(\de) = F(\de_1,1,\de_2,1,\ldots,\de_{\ell-1},1,\de_\ell)^*,
$$
where the star indicates poset dual. 
The gate $G(2,3,1)$ is shown in Figure~\ref{G(2,3,1)}.
We will use the same terminology for gates as we do for fences.
Note that $G(\de)$ begins and ends with a descending segment. 
Let $D_i$ denote the $i$th descending segment from the left, which has length $\de_i$.
The ideals of a gate which correspond to those of bounded size in the corresponding fence are as follows.  If $G(\de)$ has $\ell$ descending segments, then call an ideal $I$ of this gate {\em restricted} if $\#(I\cap D_1)\le \de_1$ and $\#(I\cap D_\ell)\neq 1$.  In other words, $I$ is restricted if it does not contain the maximal element on $D_1$, and  if it contains the minimal element on $D_\ell$ then it also contains the element above it.
Let
$$
\cIGr(\de)=\{I \mid \text{$I$ is a restricted  ideal of the gate $G(\de)$}\}
$$
Call a filter $U$ of $G(\de)$ {\em restricted} if $\#(U\cap D_1)\neq 1$ 
and $\#(U\cap D_\ell)\leq \de_\ell$.  Equivalently, $U^*$ is a restricted ideal of $G(\de)^*$, which is isomorphic to $G(\de^R)$ where 
$$
\de^R=(\de_\ell,\de_{\ell-1},\ldots,\de_2,\de_1)
$$
is the reversal of $\de$.  In general, the reversal of any sequence $b$ will be denoted by $b^R$.  Note the difference between our use of $r$ for restricted and $R$ for reversal.
The notation for restricted filters is, as expected,
$$
\cUGr(\de)=\{U \mid \text{$U$ is a restricted  filter of the gate $G(\de)$}\}
$$

We will describe a cardinality-preserving bijection 
$$
    \phi:\cIGr(\de)\ra \cUGr(\de).
$$
We will need some more notation and terminology.  Given a sequence 
$d:d_1,d_2,\ldots,d_\ell$, we use the floor symbol
$$
\dn{d}=\dn{d_1,d_2,\ldots,d_\ell}
$$
to denote the subset of $G(\de)$ (if it exists) consisting of the smallest $d_i$ elements on segment $D_i$ for $1\le i\le \ell$.  It is easy to see that $\dn{d}$ exists and is a restricted lower order ideal if and only if the following conditions hold. We use the notation $[m,n]$ for the interval of integers between $m$ and $n$ inclusive, which is shortened to $[n]$ if $m=1$.  The restricted ideal conditions are:
\begin{enumerate}[label={I\arabic*}]
    \item \label{I1} (existence) for $i\in[\ell]$ we have $0\le d_i\le\de_i+1$,
    \item \label{I2} (lower order ideal) for $i\in[2,\ell]$: if $d_i=\de_i+1$ then $d_{i-1}>0$,
    \item \label{I3} (restricted) $d_1\le \de_1$ and $d_\ell\neq1$.
\end{enumerate}
Similarly, we use ceiling notation
$$
\up{e}=\ce{e_1,e_2,\ldots,e_\ell}
$$
to denote the subset of $G(\de)$ containing the largest $e_i$ elements on segment $i$ for $1\le i\le \ell$.  Here are the conditions for $\up{d}$ to exist and be a restricted filter:
\begin{enumerate}[label={U\arabic*}]
    \item \label{U1} (existence) for $i\in[\ell]$ we have $0\le e_i\le\de_i+1$,
    \item \label{U2} (upper order ideal) for $i\in[\ell-1]$:
    if $e_i=\de_i+1$ then $e_{i+1}>0$ for,
    \item \label{U3} (restricted) $e_1\neq1$ and $e_\ell\le \de_\ell$.
\end{enumerate}

A {\em factor} of the sequence $d:d_1,d_2,\ldots,d_\ell$ is a subsequence $d_i,d_{i+1},\ldots,d_j$ of consecutive elements.  If the $d_i$ are nonnegative integers then a {\em block} is a maximal factor of positive integers.  For example, the sequence  
$$
d: 6,1,1,1,0,4,5,1,1,0,0,3,1,2
$$
has three blocks, namely $6,1,1,1$; $4,5,1,1$; and $3,1,2$.  The {\em factor of trailing ones} of a block $B$ is the (possibly empty) maximal factor $T$ of $B$ consisting only of ones such that there is no element of $B$ larger than one to its right.  In our example, the blocks have three, two, and no trailing ones, respectively.  Note that if $\dn{d_1,d_2,\ldots,d_\ell}$ is a restricted ideal, then any nonempty factor $T$ of trailing ones must be followed by a $0$.  This follows directly from the definition of $T$ unless its block contains the last element $d_\ell$.  And in that case, since the ideal is restricted, we must have $d_\ell\ge2$ so that no trailing ones are possible.

One can now construct $\phi(\dn{d_1,d_2,\dots,d_\ell})$ as follows. Consider each block $B$ of the sequence $d_1,d_2,\dots,d_\ell$, and factor it as the concatenation $B=B'T$ where $T$ is $B$'s factor of trailing ones and $B'$ is the rest of $B$.  The map $\phi$  performs the following two steps.
\begin{enumerate}[label={P\arabic*}]
    \item \label{P1} For each nonempty factor $T$ of trailing ones, exchange $T$ with the $0$ to its right.
    \item \label{P2} For each $B'$  with $\#B'\ge2$, decrease the rightmost such entry by $1$ and increase the leftmost one by $1$.
\end{enumerate}

Continuing our example, the three blocks have $3$, $2$ and $0$ trailing ones and $B'$ equal to $6$; $4,5$; and $3,1,2$ from left to right.  So after \ref{P1} we have the sequence
$$
6,0,1,1,1,4,5,0,1,1,0,3,1,2.
$$
Now applying \ref{P2} gives
$$
\phi(\dn{6,1,1,1,0,4,5,1,1,0,0,3,1,2})=\up{6,0,1,1,1,5,4,0,1,1,0,4,1,1}.
$$
Note that the construction of $\phi(\dn{d})$ does not depend on the lengths $\de_i$.

For the following proof, it will be convenient to extend the reversal operator as follows.
If $\dn{d}$ is an ideal of $G(\de)$, then let
$$
\dn{d}^R =\up{d^R},
$$
where $\up{d^R}$ is being considered as a filter of $G(\de^R)$.  Similarly let
$$
\up{e}^R = \dn{e^R}.
$$

\begin{thm} \label{thm:phi}
The map $\phi:\cIGr(\de)\ra \cUGr(\de)$  defined by~\ref{P1} and~\ref{P2} is a cardinality-preserving bijection.
\end{thm}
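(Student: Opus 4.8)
The plan is to regard $\phi$ as an operation on integer sequences, show it carries $\cIGr(\de)$ into $\cUGr(\de)$, and then exhibit an explicit two-sided inverse. The cardinality-preserving claim can be disposed of first: step~\ref{P1} merely permutes the entries of $d$ (it swaps a factor of ones with an adjacent $0$), and step~\ref{P2} moves one unit from the rightmost to the leftmost entry of a factor, so both steps leave $\sum_i d_i$ fixed; since $\#\dn{d}=\sum_i d_i$ and $\#\up{e}=\sum_i e_i$, the map preserves size. The substance is therefore that $\phi$ is a well-defined bijection.

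For well-definedness I would verify the filter conditions \ref{U1}--\ref{U3} for $e=\phi(\dn{d})$ by a case analysis according to the role each position plays in the decomposition of $d$ into blocks $B_j$ (maximal factors of positive entries) separated by zeros, with $B_j=B_j'T_j$ as in the construction. Two structural facts do most of the work. First, the leftmost entry of every block satisfies $d_i\le\de_i$: if the block does not start at position $1$ then $d_{i-1}=0$ by maximality, so $d_i=\de_i+1$ is excluded by~\ref{I2}, while if it starts at position~$1$ then $d_1\le\de_1$ by~\ref{I3}; hence when \ref{P2} raises a leftmost entry by one it respects the bound in~\ref{U1}, and the swaps in \ref{P1} respect~\ref{U1} trivially since the transported values are $0$ and $1$. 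Second, the last entry of any $B_j'$ is at least $2$ (a trailing $1$ would have been absorbed into $T_j$), which keeps \ref{P2}'s decrement nonnegative and, at the right end, gives $e_\ell\le\de_\ell$; the mirror considerations at the left end give $e_1\ne1$. Condition~\ref{U2} is the most delicate: a value $e_i=\de_i+1$ can arise only as an unchanged interior entry of some $B_j'$ or as a leftmost entry raised by~\ref{P2} (the other placements of $\de_i+1$ being excluded by~\ref{I1} and the first fact), and in either case position $i+1$ still lies inside the same unmoved factor $B_j'$ and is positive after~\ref{P2}, so $e_{i+1}>0$.

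To invert $\phi$ I would run the two steps backwards, read through the block decomposition of the \emph{output}. Define the \emph{leading ones} of a block of $e$ to be its maximal initial factor of ones, let $\overline{P2}$ move one unit from the leftmost back to the rightmost entry of each block-minus-leading-ones of length at least two, and let $\overline{P1}$ slide each leading factor of ones left across the $0$ preceding it; set $\psi=\overline{P1}\circ\overline{P2}$. Equivalently, and more economically, $\psi=R\circ\phi_{\de^R}\circ R$, where $R$ is the reversal $\dn{d}\mapsto\dn{d}^R=\up{d^R}$ and $\phi_{\de^R}$ is the same rule viewed on $G(\de^R)$; since $R$ gives bijections $\cIGr(\de)\to\cUGr(\de^R)$ and $\cUGr(\de)\to\cIGr(\de^R)$, this presentation makes $\psi:\cUGr(\de)\to\cIGr(\de)$ well defined for free. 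The crux of the inverse is the identification: \emph{the leading ones of the blocks of $e$ are exactly the trailing factors $T_j$ transported by~\ref{P1}.} This holds because the image of each $B_j'$ under~\ref{P2} begins with an entry $\ge 2$ (its leftmost is raised, or $B_j'$ is a single entry, which is $\ge 2$), so no leading one of an output block can come from a $B_j'$. Consequently, stripping leading ones recovers the \ref{P2}-images of the $B_j'$, then $\overline{P2}$ undoes~\ref{P2} on them and $\overline{P1}$ returns each $T_j$ to its slot, yielding $\psi\circ\phi=\id$; the companion identity $\phi\circ\psi=\id$ follows by applying the reversal relation with $\de$ replaced by $\de^R$, so $\phi$ is the asserted bijection.

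The main obstacle is the bookkeeping in this last step. When the gap of zeros following a block has length one, step~\ref{P1} carries $T_j$ directly against the start of $B_{j+1}'$, so that $T_j$ and $B_{j+1}'$ fuse into a single output block, whereas for larger gaps $T_j$ survives as a standalone block of ones; and blocks $B_j$ that consist entirely of ones (so $B_j'$ is empty) must be tracked separately, since their leading entry becomes a $0$ under~\ref{P1}. Checking that the leading-ones reading of $e$ reconstructs all of these splits and fusions correctly—rather than the abstract swap and unit-move count, which is routine—is where the care is required. Here the reversal identity $\psi=R\circ\phi_{\de^R}\circ R$ earns its keep: it supplies the well-definedness of $\psi$ and the second inverse identity without a second case analysis.
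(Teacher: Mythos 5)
Your proposal is correct and follows essentially the same route as the paper's proof: verify \ref{U1}--\ref{U3} by a case analysis on the block decomposition, define the inverse as the reversal conjugate $R\circ\phi_{\de^R}\circ R$, and reduce the inverse identity to the key claim that the factors of ones transported by \ref{P1} are precisely the leading ones of the blocks of the output (equivalently, the trailing ones of $\up{e}^R$), justified as you do by noting that the image of each $B'$ begins with an entry at least $2$. The paper likewise handles $\phi\circ\psi=\id$ by invoking the reversal relation rather than a second case analysis, so no substantive difference remains.
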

\begin{proof}
Let $\de=(\de_1,\de_2,\dots,\de_\ell)$, and suppose we are given $d:d_1,d_2,\ldots,d_\ell$ with $\dn{d}\in \cIGr(\de)$.  
Let $\phi(\dn{d})=\up{e}$, where $e:e_1,e_2,\ldots,e_\ell$.

We first show that $\phi$ is well defined in that 
$\#\dn{d}=\#\up{e}$ and
$\up{e}\in \cUGr(\de)$.  
The first statement is clear since \ref{P1} does not change cardinalities, and every entry increased by one in \ref{P2} is offset by an entry decreased by one.  For the second statement, we need to check \ref{U1}--\ref{U3}.  The truth of \ref{U1} follows from the fact that $d$ satisfies \ref{I1} unless $d_i=\de_i+1$ and $d_i$ is increased in step \ref{P2}.  But if $i=1$ then this contradicts \ref{I3}, and if $i>1$ then this contradicts \ref{I2}, since $d_i$ was not the first nonzero entry in its block.
If \ref{U2} is violated, then $e_i=\de_i+1$  and $e_{i+1}=0$.  So $d_i$ must have been the last entry of some $B'$.  If $\#B'=1$ then $d_{i-1}=0$. But then $e_i=d_i\le \de_i$, because if we had $d_i=\de_i+1$ then $\dn{d}$ would not be an ideal since it violates \ref{I2}.
On the other hand, if $\#B'>1$ then by~\ref{P2} we have $e_i=d_i-1\le\de_i$, which is another contradiction.
Thus  $\up{e}$ is a filter.
 Finally, we must verify \ref{U3}. For the first condition suppose, towards a contradiction, that $e_1=1$, and let $B'$ be the initial factor of the block $B$ containing $d_1\ge1$.  If $e_1=d_1$ then by~\ref{P2} we must have $\#B'=1$.  But then $B'$ would have been included in the trailing ones of $B$ and moved to the right in~\ref{P1}.
 The other possibility is $e_1=d_1+1\ge2$, again a contradiction.  Thus the first condition holds.
 To prove that the second condition is true, assume the opposite which is that $e_\ell=\de_\ell+1$. Clearly $e_\ell\ge2$.  It follows that $d_\ell$ must have been part of a block $B$ with no trailing ones so that $B'=B$.  If $\#B'=1$ then $d_\ell=e_\ell=\de_\ell+1$.  By~\ref{P2}, this forces $d_{\ell-1}\neq 0$.  But then $d_\ell$ was not the only element in $B'$ which is impossible.
If $\#B'\ge2$ then, by~\ref{P2} again, $e_\ell=d_\ell-1\le \de_\ell$.  This final contradiction finishes the proof that~\ref{U3} holds and that $\phi$ is well defined.

 To show that $\phi$ is bijective, we construct $\phi^{-1}:\cUGr(\de)\ra \cIGr(\de)$.  If $\up{e}\in \cUGr(\de)$ then define
 $$
 \phi^{-1}(\up{e}) = \phi(\up{e}^R)^R,
 $$
 where, on the right-hand side, the map being applied is
 $\phi:\cIGr(\de^R)\ra\cUGr(\de^R)$.
 Because reversal is an involution, showing that $\phi^{-1}$ is well defined is equivalent to showing that $\up{e}$ is a restricted filter if and only if $\up{e}^R$ is a restricted ideal.  But this follows immediately by comparing \ref{I1}--\ref{I3} with \ref{U1}--\ref{U3}.
 
 To show that $\phi^{-1}$ is indeed the inverse of $\phi$, we claim that the factors of ones moved by $\phi$ are the same as those moved by $\phi^{-1}$.  We will only show that if a factor is moved by $\phi$ then it is moved by $\phi^{-1}$, as the reverse implication is similar.
 Let $T$ be a factor of trailing ones in $\dn{d}$. After $T$ moves when applying $\phi$, it either becomes a block itself or merges with $B'$ where $B$ is the block which was to its right.  In the first case, $T$ is clearly a block of ones in $\up{e}=\phi(\dn{d})$ and so also in $\up{e}^R$.  Thus it will be moved when computing $\phi^{-1}(\up{e})$.  In the second case, it suffices to show that the leftmost entry $d_i$ of $B$ becomes $e_i\ge2$ in $\up{e}$, since then $T$ becomes a factor of trailing ones in $\up{e}^R$.  If $\#B=1$ then $e_i=d_i\ge2$ since, otherwise, $d_i=1$ would have been one of the trailing ones of the original block and moved to the right.  On the other hand, if $\#B\ge2$, then by \ref{P2} we have $e_i=d_i+1\ge2$, which is again what we wished to show and completes the second case of the claim. 
 
 Because of the claim, $\phi^{-1}$ acts as a step-by-step inverse of $\phi$.  Indeed, moving factors right in $\dn{d}$ corresponds to moving the same factors left in $\up{e}$.  And this is equivalent to moving them right in $\up{e}^R$ by applying $\phi$, while the final reversal brings the factor back to its original position.  Also, what \ref{P2} does to the two ends of the remains $B'$ of a block $B$ are inverses of each other.  This shows that $B'$ will also be restored to itself by $\phi^{-1}$, so that this map does indeed undo what was done by $\phi$.  This completes the proof.
 \end{proof}

\subsection{Bijection $\Phi$ for fences}
\label{bpf}

Let $\be=(\be_1,\be_2,\dots,\be_s)\comp n-1$ with $s$ odd.
Write $s=2\ell+1$, where $\ell\ge0$. 
Our algorithm will be simplest to state using somewhat different parameters for the corresponding fence $F=F(\be)$.  
These constants first appeared in the work of Elizalde, Plante, Roby and Sagan~\cite{EPRS:rf} concerning rowmotion on fences.
Call the elements of $F$ which appear on two segments {\em shared} and all other elements {\em unshared}.
It will be convenient to use different notation and conventions for ascending and descending segments. 
Let the ascending segments of $F$ be $A_1,A_2,\dots,A_{\ell+1}$ from left to right, and similarly let $D_1,D_2,\dots,D_\ell$ be the descending segments. 
Let
\begin{equation}
\label{de_i}
 \de_i = 1+(\text{the number of unshared elements on $D_{2i}$})   
\end{equation}
for $1\le i\le \ell$.
Thus $\de_i=\be_{2i}$.  Similarly, let 
\begin{equation}
\label{al_i}
   \al_i = 1+(\text{the number of unshared elements on $A_{2i-1}$}) 
\end{equation}
for $1\le i\le \ell+1$.
It follows that  $\al_i=\be_{2i-1}$ for $2\le i\le \ell$, $\al_1=\be_1+1$, and $\al_{\ell+1}=\be_s+1$.
For $i\in[\ell+1]$, denote by $\tA_i$ the chain consisting of the unshared elements on segment $A_i$. Note that
 $\#\tA_i=\al_i-1$ and $\#D_i=\de_i+1$, and that each element from $F$ appears in exactly one of the $\tA_i$ or $D_i$.

We encode ideals $I$ of $F(\be)$ by pairs of sequences $a:a_1,a_2,\dots,a_{\ell+1}$ and $d:d_1,d_2,\dots,d_\ell$,
where $a_i=\#(I\cap \tA_i)$ and $d_i=\#(I\cap D_i)$ for all $i$. It is sometimes convenient to visualize these sequences as placed one above the other, with entries interlaced, i.e.,
$$
\Dn{\barr{c}a\\ d\earr}=\Dn{\begin{array}{cccccccccc} a_1 && a_2 &&\cdots&&a_{\ell}&& a_{\ell+1}\\
& d_1 && d_2 &&\cdots&& d_\ell &
\end{array}}.
$$
Similarly, we encode filters $U$ of $F(\be)$ by pairs of sequences $b:b_1,b_2,\dots,b_{\ell+1}$ and $e:e_1,e_2,\dots,e_\ell$,
where $b_i=\#(U\cap \tA_i)$ and $e_i=\#(U\cap D_i)$ for all $i$, and we write
$$
\Up{\barr{c} b \\ e\earr}=\Up{\begin{array}{cccccccccc} b_1 && b_2 &&\cdots&&b_{\ell}&& b_{\ell+1}\\
& e_1 && e_2 &&\cdots&& e_\ell &
\end{array}}.
$$

A pair of sequences $\dn{\pair{a}{d}}$ as above encodes an ideal of $F(\be)$ if and only if the following conditions hold:
\begin{enumerate}[label={IF\arabic*}]
    \item \label{IF1} for $i\in[\ell+1]$ we have $0\le a_i\le\al_i-1$,
    \item \label{IF2} for $i\in[\ell]$ we have $0\le d_i\le \de_i+1$,
    \item \label{IF3} for $i\in[\ell]$: if $d_i=\de_i+1$ then $a_{i}=\al_{i}-1$,  and if $i>1$ then $d_{i-1}>0$ as well,
    \item \label{IF4}  for $i\in[\ell]$: if $a_{i+1}>0$ then $d_i>0$.
\end{enumerate}
Note that the size of the ideal is $\sum_i a_i+\sum_i d_i$.

To obtain the conditions for $\up{\pair{b}{e}}$ to encode a filter of $F(\be)$, note that this happens if and only if  
\begin{equation}
\label{upR}
    \up{\pair{b}{e}}^R\stackrel{\text{def}}{=}\dn{\pair{b^R}{e^R}}
\end{equation}
encodes an ideal of $F(\be^R)$.  
Similarly define 
\begin{equation}
\label{dnR}
\dn{\pair{a}{d}}^R=\up{\pair{a^R}{d^R}}.
\end{equation}
The following is equivalent to $\up{\pair{b}{e}}$ being a filter of $F(\be)$:
\begin{enumerate}[label={UF\arabic*}]
    \item \label{UF1} for $i\in[\ell+1]$ we have $0\le b_i\le\al_i-1$,
    \item \label{UF2} for $i\in[\ell]$ we have $0\le e_i\le \de_i+1$,
    \item \label{UF3} for $i\in[\ell]$: if $e_i=\de_i+1$ then $b_{i+1}=\al_{i+1}-1$,  and if $i<\ell$ then $e_{i+1}>0$ as well,
    \item \label{UF4}  for $i\in[\ell]$: if $b_i>0$ then $e_i>0$.
\end{enumerate}

Next, we define a bijection $\Phi:\cI_k(F)\to\cU_k(F)$, where 
$$
k\le\min\{\be_1,\be_s\}=\min\{\al_1,\al_{\ell+1}\}-1.
$$
Given an ideal of $\cI_k(F)$ encoded by a pair of sequences $\dn{\pair{a}{d}}$, we apply the following steps, where we use $x:=y$ to mean that $x$ is to be assigned the value  $y$.
\begin{enumerate}[label={PH\arabic*}]
    \item \label{PH1} For every $i\in[\ell]$ such that $d_i=1$ and $a_{i+1}<\al_{i+1}-1$, let $d_i:=0$ and $a_{i+1}:=a_{i+1}+1$.
    \item \label{PH2} Decompose  $d:d_1,d_2,\dots,d_\ell$ into factors by splitting between $d_{i-1}$ and $d_i$ for each $i\in[2,\ell]$ such that $a_{i}<\al_{i}-1$. Apply $\phi$ (defined by \ref{P1}--\ref{P2}) to each factor to obtain a sequence $e$.  Let $b:=a$.
    \item \label{PH3} For every $i\in[\ell]$ such that $e_i=0$ and $b_i>0$, let  $e_i:=1$ and $b_i:=b_i-1$.
\end{enumerate}
Define $$\Phi(\dn{\pair{a}{d}})=\up{\pair{b}{e}}.$$

For example, let $F=F(6,2,1,2,3,1,6)$ be the fence in Figure~\ref{Phi-example}, which has $\al:7,1,3,7$ and $\de:2,2,1$. Label its elements $x_1,x_2,\dots,x_{22}$ from left to right, and consider the ideal $I=\{x_9,x_{10},x_{11},x_{12},x_{13},x_{16}\}\in\cI(F)$, which is encoded by
$$\Dn{\barr{c}a\\ d\earr}=\Dn{\begin{array}{cccccccc} 0 && 0 && 1 &&0\\
& 1 && 3 && 1 &
\end{array}}.
$$
In the top of Figure~\ref{Phi-example} the elements of $I$ are circled. Applying \ref{PH1} yields
$$\begin{array}{cccccccc} 0 && 0 && 1 &&1\\
& 1 && 3 && 0 &
\end{array}.
$$
In step \ref{PH2}, the sequence $1,3,0$ is split into two factors $1,3$ and $0$. Applying $\phi$ to each one we get
$$\begin{array}{cccccccc} 0 && 0 && 1 &&1\\
& 2 && 2 && 0 &
\end{array}.
$$
Finally, applying \ref{PH3} yields
$$\Up{\barr{c} b \\ e\earr}=\Up{\begin{array}{cccccccc} 0 && 0 && 0 &&1\\
& 2 && 2 && 1 &
\end{array}},
$$
which encodes the filter $U=\{x_7,x_8,x_{10},x_{11},x_{15},x_{21}\}\in\cI(F)$, depicted in the bottom of Figure~\ref{Phi-example}.

\begin{figure}[htb]\centering
\begin{tikzpicture}[scale=.6]
\draw(-1,3) node{$I=$};
\draw (0,0)--(6,6)--(8,4)--(9,5)--(11,3)--(14,6)--(15,5)--(21,11);
\foreach \position in {(0,0), (1,1), (2,2), (3,3), (4,4), (5,5), (6,6), (7,5), (8,4), (9,5), (10,4), (11,3), (12,4), (13,5), (14,6), (15,5), (16,6), (17,7), (18,8), (19,9), (20,10), (21,11)}
   \fill\position circle(.1);
\foreach \position in {(8,4), (9,5), (10,4), (11,3), (12,4), (15,5)}
   \draw[red,thick]\position circle(.2);
\draw(0,.7) node{$x_1$};
\draw(1,1.7) node{$x_2$};
\draw(2,2.7) node{$x_3$};
\draw(3,3.7) node{$x_4$};
\draw(4,4.7) node{$x_5$};
\draw(5,5.7) node{$x_6$};
\draw(6,6.7) node{$x_7$};
\draw(7,5.7) node{$x_8$};
\draw(8,4.7) node{$x_9$};
\draw(9,5.7) node{$x_{10}$};
\draw(10,4.7) node{$x_{11}$};
\draw(11,3.7) node{$x_{12}$};
\draw(12,4.7) node{$x_{13}$};
\draw(13,5.7) node{$x_{14}$};
\draw(14,6.7) node{$x_{15}$};
\draw(15,5.7) node{$x_{16}$};
\draw(16,6.7) node{$x_{17}$};
\draw(17,7.7) node{$x_{18}$};
\draw(18,8.7) node{$x_{19}$};
\draw(19,9.7) node{$x_{20}$};
\draw(20,10.7) node{$x_{21}$};
\draw(21,11.7) node{$x_{22}$};
\draw[|->] (10,1)-- node[right]{$\Phi$} (10,0);
\begin{scope}[shift={(0,-7)}]
\draw(-1,3) node{$U=$};
\draw (0,0)--(6,6)--(8,4)--(9,5)--(11,3)--(14,6)--(15,5)--(21,11);
\foreach \position in {(0,0), (1,1), (2,2), (3,3), (4,4), (5,5), (6,6), (7,5), (8,4), (9,5), (10,4), (11,3), (12,4), (13,5), (14,6), (15,5), (16,6), (17,7), (18,8), (19,9), (20,10), (21,11)}
   \fill\position circle(.1);
\foreach \position in {(6,6), (7,5), (9,5), (10,4), (14,6), (21,11)}
   \draw[red,thick]\position circle(.2);
\draw(0,.7) node{$x_1$};
\draw(1,1.7) node{$x_2$};
\draw(2,2.7) node{$x_3$};
\draw(3,3.7) node{$x_4$};
\draw(4,4.7) node{$x_5$};
\draw(5,5.7) node{$x_6$};
\draw(6,6.7) node{$x_7$};
\draw(7,5.7) node{$x_8$};
\draw(8,4.7) node{$x_9$};
\draw(9,5.7) node{$x_{10}$};
\draw(10,4.7) node{$x_{11}$};
\draw(11,3.7) node{$x_{12}$};
\draw(12,4.7) node{$x_{13}$};
\draw(13,5.7) node{$x_{14}$};
\draw(14,6.7) node{$x_{15}$};
\draw(15,5.7) node{$x_{16}$};
\draw(16,6.7) node{$x_{17}$};
\draw(17,7.7) node{$x_{18}$};
\draw(18,8.7) node{$x_{19}$};
\draw(19,9.7) node{$x_{20}$};
\draw(20,10.7) node{$x_{21}$};
\draw(21,11.7) node{$x_{22}$};
\end{scope}
\end{tikzpicture}
\capt{Computing $\Phi(\{x_9,x_{10},x_{11},x_{12},x_{13},x_{16}\})$ 
in $F(6,2,1,2,3,1,6)$. \label{Phi-example}}
\end{figure}

We now prove the main theorem of this section.
\begin{thm} \label{thm:Phi}
Let $\be=(\be_1,\be_2,\ldots,\be_s)$ where $s=2\ell+1$ and 
\begin{equation}
\label{kle}
    k\le\min\{\be_1,\be_s\}.
\end{equation}
The map $\Phi:\cI_k(\be)\ra \cU_k(\be)$  defined by~\ref{PH1}--\ref{PH3} is a bijection.
\end{thm}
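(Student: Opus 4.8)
The plan is to show that $\Phi$ is well defined and cardinality preserving, so that it maps $\cI_k(\be)$ into $\cU_k(\be)$, and then to exhibit an inverse. The guiding principle is that steps~\ref{PH1} and~\ref{PH3} are purely local operations translating between the fence boundary conditions \ref{IF1}--\ref{IF4} and the gate boundary conditions \ref{I1}--\ref{I3}, while the substantive combinatorial work is performed in~\ref{PH2} by the gate bijection $\phi$ of Theorem~\ref{thm:phi}, applied factor by factor. I would first record the two consequences of $k\le\min\{\be_1,\be_s\}=\min\{\al_1,\al_{\ell+1}\}-1$ that drive everything. Since $d_1=\de_1+1$ would force $a_1=\al_1-1=\be_1$ by~\ref{IF3}, hence total size $>k$, we get $d_1\le\de_1$; dually, $k\le\be_s$ guarantees that $d_\ell$ cannot survive step~\ref{PH1} as a $1$ (if it did, then $a_{\ell+1}=\al_{\ell+1}-1=\be_s$, again forcing size $>k$), so after~\ref{PH1} the last descending segment carries $0$ or at least $2$. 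These are exactly the restricted gate conditions~\ref{I3} at the two ends, and this is the only place the hypothesis~\eqref{kle} is used.

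For well-definedness, cardinality preservation is immediate: \ref{PH1} and~\ref{PH3} each move a single unit between an $a$- (resp.\ $b$-) entry and the adjacent $d$- (resp.\ $e$-) entry, and $\phi$ preserves cardinality by Theorem~\ref{thm:phi}; since the encoded size is $\sum_i a_i+\sum_i d_i$, the total is unchanged. The real content is verifying \ref{UF1}--\ref{UF4}. I would argue that after~\ref{PH1} every factor produced by the splitting in~\ref{PH2} satisfies \ref{I1}--\ref{I3}: the splitting occurs precisely at the barriers where $a_i<\al_i-1$, so within a factor all interior ascending segments are full; the first entry of each factor is some $d_j$ with $a_j<\al_j-1$ (or $j=1$), whence $d_j\le\de_j$ by~\ref{IF3} (resp.\ by the paragraph above), giving the left end of~\ref{I3}, while the right end is supplied by $d'_\ell\neq1$ and by the barriers. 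Feeding each factor to $\phi$ yields a restricted gate filter, i.e.\ \ref{U1}--\ref{U3} factorwise; step~\ref{PH3} then repairs the left boundary of each block exactly as~\ref{PH1} repaired the right, and translating the factorwise conditions back through $b:=a$ and the unit moves of~\ref{PH3} yields \ref{UF1}--\ref{UF4}.

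To prove bijectivity I would mirror the treatment of $\phi$ and define the candidate inverse by reversal,
$$
\Phi^{-1}(\up{\pair{b}{e}})=\Phi\!\left(\up{\pair{b}{e}}^R\right)^{\!R},
$$
where on the right the map is $\Phi:\cI_k(\be^R)\ra\cU_k(\be^R)$ and reversal is as in~\eqref{upR} and~\eqref{dnR}. Since $\min\{\be_1,\be_s\}=\min\{(\be^R)_1,(\be^R)_s\}$, the hypothesis~\eqref{kle} is preserved under reversal, so the well-definedness just established applies to $\be^R$ and shows $\Phi^{-1}$ maps $\cU_k(\be)$ into $\cI_k(\be)$. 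It then remains to check that this map genuinely undoes $\Phi$, and here I would run the same step-by-step argument as in the proof of Theorem~\ref{thm:phi}: under the reversal~\eqref{upR}--\eqref{dnR} the roles of \ref{PH1} and~\ref{PH3} are interchanged, the factorization in~\ref{PH2} is reversal-invariant, and $\phi$ is its own inverse up to reversal, so each elementary move of $\Phi$ is retraced by $\Phi^{-1}$.

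The main obstacle is the verification in the second paragraph that~\ref{PH1} prepares the factors correctly and that~\ref{PH3} restores exactly the coupling conditions~\ref{UF3} and~\ref{UF4}; note in particular that~\ref{PH1} can convert a barrier into a non-barrier by filling an ascending segment, so the factorization in~\ref{PH2} must be read off the post-\ref{PH1} sequence. The delicate case is a descending segment that becomes completely full, $e_i=\de_i+1$: one must show simultaneously that $b_{i+1}=\al_{i+1}-1$ (so that $i$ and $i+1$ lie in a common factor, with no barrier between them) and that $e_{i+1}>0$ when $i<\ell$, reconciling the trailing-ones mechanism of $\phi$ at factor boundaries with the unit moves of~\ref{PH1} and~\ref{PH3}. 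Ensuring that the triggering conditions of those moves remain detectable from the output, so that $\Phi^{-1}$ recovers exactly which units were moved, is the same bookkeeping that underlies the claim in the proof of Theorem~\ref{thm:phi}, and it is where the argument requires the most care.
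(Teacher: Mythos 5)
Your proposal is correct and follows essentially the same route as the paper's proof: the same two uses of the hypothesis $k\le\min\{\be_1,\be_s\}$ to secure the restricted-gate conditions~\ref{I3} at the two ends, the same factorwise reduction to Theorem~\ref{thm:phi}, and the same definition of the inverse via reversal, $\Phi^{-1}(\up{\pair{b}{e}})=\Phi(\up{\pair{b}{e}}^R)^R$, with the key remaining check being that the break points of~\ref{PH2} are preserved under reversal. The delicate points you flag (the case $e_i=\de_i+1$ in~\ref{UF3}, and recovering which unit moves were triggered) are exactly the ones the paper's proof works through in detail.
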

\begin{proof}
We maintain the notation established in the lead up to this theorem.
To show that $\Phi$ is well defined, we need to first demonstrate that $\phi$ can be applied to the factors determined by \ref{PH2} in that they satisfy \ref{I1}--\ref{I3}.  The first two conditions follow directly from the fact that $I=\dn{\pair{a}{d}}$ is an ideal.  For \ref{I3}, we begin with $d_1$ in the first factor and assume, towards a contradiction, that $d_1=\de_1+1$.  But then \ref{IF3} forces $a_1=\al_1-1$. So
$$
k=\#I\ge a_1+d_1 \ge\al_1 =\be_1+1,
$$
which contradicts~\eqref{kle}.  Now consider $d_\ell$ in the last factor and suppose, again towards a contradiction, that $d_\ell=1$ when $\phi$ is about to be applied.
Note that we must also have $a_{\ell+1}<\al_{\ell+1}-1$, since otherwise we would again contradict~\eqref{kle} similarly to our first case.  But under these conditions, \ref{PH1} would have set $d_\ell$ to $0$, which is again a contradiction. To finish verifying \ref{I3}, we must consider the splits between $d_{i-1}$ and $d_i$ for $2\le i\le\ell$, which occur when $a_i<\al_i-1$ in \ref{PH2}.
If $d_{i-1}=1$ then we must have had $a_i<\al_i-1$ to start with, since $a_i$ can only increase in value, so \ref{PH1} would have again set $d_{i-1}$ to $0$.  If $d_i=\de_1+1$ then \ref{IF3} forces $a_i=\al_i-1$, which contradicts the assumption in \ref{PH2}.  So in all cases $\phi$ can be applied.

That $\Phi$ preserves cardinality follows from the fact that $\phi$ does and that the assignments in steps \ref{PH1} and \ref{PH3} keep the sum of the sequences equal.  So to finish the proof that $\Phi$ is well defined we need to show that
$\Phi(\dn{\pair{a}{d}})=\up{\pair{b}{e}}$ satisfies \ref{UF1}--\ref{UF4}.
The first two items follow by the equalities and bounds imposed in \ref{PH1} and \ref{PH3} before reassignment, and from the fact that the image of $\phi$ satisfies \ref{U1}.  To check \ref{UF3}, suppose $e_i=\de_i+1$.  The ``and if" clause is  true because applying $\phi$ gives a sequence satisfying \ref{U2}.  For the first clause we will see that having $b_{i+1}<\al_{i+1}-1$ leads to a contradiction.  
Note that the value of $b_{i+1}$ could not have been lowered in \ref{PH3} since $e_i=\de_i+1\neq 0$.  So we have $a_{i+1}=b_{i+1}<\al_{i+1}-1$, and the condition in \ref{PH2} forces $e_i$ to be the end of a factor.  But since $\phi$ maps to restricted filters, we have that $e_i\le \de_i$ by \ref{U3}, which is the desired contradiction.  Finally, we tackle \ref{UF4} by contradiction again, assuming $b_i>0$ and $e_i=0$.  If this had been the case, then $e_i$ would have been reassigned to be $1$ in \ref{PH3}.  This completes the verification that $\Phi$ is well defined.

As with $\phi$, we define $\Phi^{-1}$ to be
\begin{equation}\label{eq:Phiinv}
\Phi^{-1}(\up{\pair{b}{e}}) = \Phi(\up{\pair{b}{e}}^R)^R.
\end{equation}
As in the demonstration of Theorem~\ref{thm:phi}, the proof that $\Phi^{-1}$ is well defined follows from the fact that $\Phi$ is.

We first prove that $\Phi^{-1}\circ\Phi$ is the identity.
We first need to show that if $\Phi(\dn{\pair{a}{d}}) =\up{\pair{b}{e}}$ then $d$ gets broken into factors when applying $\Phi$ at the same indices as $e^R$ when applying $\Phi^{-1}$.  We will show that every break point of $d$ becomes a break point of $e^R$, again leaving the reverse implication to the reader.  If there was a break between $d_{i-1}$ and $d_i$ in applying $\Phi$, then we must have $a_i<\al_i-1$ in step \ref{PH2}. After applying \ref{PH3} we have $b_i\le a_i<\al_i-1$.  
Next \ref{PH1} is applied to $\up{\pair{b}{e}}^R$ as the first step of $\Phi^{-1}$.  If $b_i$ does not change at this step, then \ref{PH2} will still split $e^R$ between $e_i$ and $e_{i-1}$ because of the previous inequality.  If $b_i$ does increase during \ref{PH1}, then it must have been because $e_i=1$ at this stage.  But $d_i$ was first in its factor before applying $\phi$,
and so, by \ref{U3}, we had $e_i\neq 1$ after \ref{PH2} was applied as part of $\Phi$.  So the only way to have $e_i=1$ at the end of \ref{PH3} is if we also decreased $b_i$ by one in that step.  In this case $b_i<a_i<\al_i-1$, which makes $b_i<\al_i-1$ after adding one in \ref{PH1}.  So \ref{PH2} will still break at the same spot.

It is now easy to see that $\Phi^{-1}$ will act as a step-by-step inverse for $\Phi$.  Indeed, applying \ref{PH1} for $\Phi^{-1}$ undoes what \ref{PH3} did for $\Phi$.  By what we proved in the previous paragraph and the definition of $\phi^{-1}$, the steps \ref{PH2} for $\Phi$ and $\Phi^{-1}$ cancel each other out.  And finally step \ref{PH3} for $\Phi^{-1}$ cancels out \ref{PH1} in $\Phi$.  

To complete the proof,  we show that $\Phi^{-1}\circ\Phi$ is the identity map.  This follows from equation~\eqref{eq:Phiinv} and the fact that $\Phi^{-1}\circ\Phi$ is the identity, since
$$
\Phi(\Phi^{-1}(\up{\pair{b}{e}})) = \Phi(\Phi(\up{\pair{b}{e}}^R)^R)
=\Phi^{-1}(\Phi(\up{\pair{b}{e}}^R))^R
=(\up{\pair{b}{e}}^R)^R=\up{\pair{b}{e}}
$$
as desired.
\end{proof}


\section{Proof of symmetry for circular fences}
\label{psc}

We will now show how slight modifications of $\phi$ and $\Phi$ can be used to give a bijective proof of Theorem~\ref{FbSym}.  We use the notation
$$
\cIb(\be) = \{I \mid \text{$I$ is a lower order ideal of $\Fb(\be)$}\}
$$
and
$$
\cUb(\be) = \{U \mid \text{$U$ is an upper order ideal of $\Fb(\be)$}\}.
$$
Our goal is to  construct a cardinality-preserving bijection $\Phib:\cIb(\be)\ra\cUb(\be)$.
As before, we start with the case where ascending segments have length one.

\subsection{Bijection $\phib$ for narrow circular fences}

We call a circular fence $\Fb(\be)$ {\em narrow} if its composition has the form $\be=(1,\de_1,1,\de_2,\dots,1,\de_\ell)$.  Let $D_i$ be the descending segment of length $\de_i$.  Any $I\in\cIb(\be)$ can be expressed as $I=\dn{d_1,d_2,\ldots,d_\ell}$, 
where $d_i=\#(I\cap D_i)$ for $i\in[\ell]$, satisfying the following conditions:
\begin{enumerate}[label={ICN\arabic*}]
    \item \label{ICN1} (existence) for $i\in[\ell]$ we have $0\le d_i\le\de_i+1$,
    \item \label{ICN2} (ideal) for $i\in[\ell]$: if $d_i=\de_i+1$ then $d_{i-1}>0$, where subscripts are taken modulo $\ell$.
\end{enumerate}
Similarly, the conditions for filters 
$U=\up{e_1,e_2,\ldots,e_\ell}$ of $\Fb(\be)$ are as follows:
\begin{enumerate}[label={UCN\arabic*}]
    \item \label{UCN1} (existence) for $i\in[\ell]$ we have $0\le e_i\le\de_i+1$,
    \item \label{UCN2} (filter) for $i\in[\ell]$: if $e_i=\de_i+1$ then $e_{i+1}>0$, where subscripts are taken modulo $\ell$.
\end{enumerate}

To define $\phib$, it will be useful to define a {\em circular sequence}
$\spn{d}: \spn{d_1,d_2,\ldots,d_\ell}$, which is obtained from the linear sequence $d:d_1,d_2,\ldots,d_\ell$  by considering $d_\ell$ as followed by $d_1$.  Equivalently, the subscripts in a circular sequence are to be treated modulo $\ell$ and this will be our convention in all definitions pertaining to circular sequences.  
  Note our calling ordinary sequences {\em linear} to distinguish them from the circular case.

A {\em factor} of  $\spn{d}$ is a subsequence of the form $d_i,d_{i+1},\ldots,d_j$.
Note that this is a linear sequence even though it may wrap around to the beginning of $d$.  Call $d$ {\em positive} if all its elements are positive.
If $d$ is not positive (and so has at least one zero) then a {\em block} $B$ of $\spn{d}$ is a maximal factor of positive elements.  For example, the circular sequence
$$
\spn{d} = \spn{7,1,1,0,5,1,0,0,3}
$$
has blocks $5,1$ and $3,7,1,1$.  Now the {\em trailing ones} of a block are defined exactly as in the linear case. Conveniently, for circular sequences  every factor of trailing ones is followed by a zero, which is why we do not need the notion of restriction for ideals in circular fences.  In our example, block $5,1$ has one trailing one and block $3,7,1,1$ has two.

Now suppose we are given $I=\dn{d_1,d_2,\ldots,d_\ell}\in\cIb(\be)$ with $\Fb(\be)$ narrow. If $d:d_1,d_2,\ldots,d_\ell$ is not positive, then we define 
$\phib(I)$ by applying~\ref{P1} and~\ref{P2} for $\phi$ to $\spn{d_1,d_2,\ldots,d_\ell}$.
Note that this is well defined since factors of a circular permutation are linear.  Returning to our example, we have
$$
I=\dn{7,1,1,0,5,1,0,0,3} \stackrel{\text{\ref{P1}}}{\mapsto}
\spn{7,0,1,1,5,0,1,0,3}\stackrel{\text{\ref{P2}}}{\mapsto}
\up{6,0,1,1,5,0,1,4}=\phib(I).
$$
If $d$ is positive, then we let 
$$
\phib\dn{d_1,d_2,\ldots,d_\ell} =\up{d_1,d_2,\ldots,d_\ell}.
$$

The proof of the next result is very similar to that of Theorem~\ref{thm:phi}, so the demonstration is omitted.
\begin{thm}
Let $\be=(1,\de_1,1,\de_2,\dots,1,\de_\ell)$. The map $\phib:\cIb(\be)\ra \cUb(\be)$  defined above is a cardinality-preserving bijection.\hqed
\end{thm}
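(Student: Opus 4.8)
The plan is to follow the template of the proof of Theorem~\ref{thm:phi}, adapting each step to the cyclic setting and paying special attention to the dichotomy between positive and non-positive sequences. The whole argument rests on one structural invariant that I would establish first: both steps \ref{P1} and \ref{P2} preserve the number of zero entries in a sequence. Indeed, \ref{P1} merely transposes a factor of trailing ones with the single zero immediately to its right, so it permutes entries without creating or destroying zeros; and in \ref{P2} the rightmost entry of $B'$ exceeds $1$ (it is not a trailing one), so decreasing it by one keeps it positive, while the leftmost entry stays positive after being increased. Consequently $d$ is positive if and only if $\phib\dn{d}$ is positive, which means the two defining cases of $\phib$ never interfere and, as we will see, are interchanged correctly by the inverse.

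For well-definedness I would first dispatch the positive case: if $d:d_1,\dots,d_\ell$ is positive then $\up{d_1,\dots,d_\ell}$ satisfies \ref{UCN1} because $d$ already satisfied \ref{ICN1}, and it satisfies \ref{UCN2} vacuously, since every $e_{i+1}=d_{i+1}>0$. In the non-positive case the verification that $\phib(I)$ obeys \ref{UCN1}--\ref{UCN2} runs exactly as the check of \ref{U1}--\ref{U2} in Theorem~\ref{thm:phi}, now applied to the linear factors cut out of $\spn{d}$ at its zeros. The one genuine simplification is that no analogue of the restricted conditions \ref{U3} is needed: on a circular sequence every factor of trailing ones is automatically followed by a zero, so the boundary pathologies that forced the restriction in the gate case simply cannot occur.

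For bijectivity I would define $\phib^{-1}(U)=\phib(U^R)^R$, where the reversal of a circular sequence reverses the cyclic order and swaps the roles of ideals and filters; this is legitimate because reversal is an involution and, comparing \ref{ICN1}--\ref{ICN2} with \ref{UCN1}--\ref{UCN2}, it carries the ideal conditions for $\Fb(\be)$ to the filter conditions for the reversed narrow fence. Here the invariant from the first paragraph does the essential work: since $\phib$ preserves the number of zeros, a positive ideal maps to a positive filter and a non-positive ideal to a non-positive filter, so $\phib^{-1}$ reconstructs the correct case. On positive sequences both maps act as the identity on the sequence of $d_i$, so they are mutually inverse trivially. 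On non-positive sequences one argues, just as in Theorem~\ref{thm:phi}, that the factors of ones moved by \ref{P1} in $\spn{d}$ become the factors of ones moved when computing $\phib(U^R)$, and that the two-ended adjustment of \ref{P2} is its own inverse; the final reversal returns every factor to its starting position.

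I expect the main obstacle to be precisely this positive/non-positive dichotomy, which has no counterpart in the linear argument. In the gate case the first and last segments gave canonical endpoints, and the restriction conditions tamed their behavior; on a cycle there are no endpoints, and when $d$ has no zero there is no block structure at all for \ref{P1}--\ref{P2} to act on, forcing the separate positive clause. Checking that this clause meshes seamlessly with the block-based clause, that is, that the number-of-zeros invariant really does partition both $\cIb(\be)$ and $\cUb(\be)$ compatibly with $\phib$, is the crux; everything else is a faithful transcription of the proof of Theorem~\ref{thm:phi}.
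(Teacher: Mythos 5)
Your proof is correct and takes exactly the route the paper intends: the paper omits the demonstration of this theorem, stating only that it is very similar to that of Theorem~\ref{thm:phi}, and your argument is precisely that adaptation (linear-factor analysis at the zeros of $\spn{d}$, no restriction conditions needed, inverse via reversal). Your explicit observation that \ref{P1} and \ref{P2} preserve the number of zero entries---so that the positive and non-positive cases of $\phib$ are compatible with $\phib^{-1}$---is a clean and correct justification of the one point where the circular setting genuinely departs from the linear one.
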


\subsection{Bijection $\Phib$ for circular fences}

Now consider an arbitrary circular fence $\Fb=\Fb(\be)$, where $\be=(\be_1,\be_2,\ldots,\be_{2\ell})\comp n$.  We 
again use $A_i$ and $D_i$ to denote the corresponding ascending and descending segments, noting that now there are only $\ell$ ascending segments.  Define $\de_i$ and $\al_i$ using equations~\eqref{de_i} and~\eqref{al_i}, respectively.  We now have $\de_i=\be_{2i}$ and $\al_i=\be_{2i-1}$ for all $i$ (unlike the linear case, there are no exceptions).

Given $I\in\cIb(\be)$, we continue to let $a_i=\#(I\cap \tA_i)$ and 
$d_i=\#(I\cap D_i)$, where $\tA_i$ also retains its previous meaning.  The notation for $I$ will be
\begin{equation}
\label{DnCir}
    \Dn{\barr{c}a\\ d\earr}=\Dn{\begin{array}{cccccccccc} a_1 && a_2 &&\cdots&&a_{\ell}&&a_1\\
    & d_1 && d_2 &&\cdots&& d_\ell&
\end{array}}.
\end{equation}
Note the repetition of $a_1$ in the top line, which will make our future definitions simpler.
The encoding for filters is changed mutatis mutandis.

We can now easily write down the conditions for being an ideal of a circular fence in terms of the $a_i$ and $d_i$ (all subscripts are modulo $\ell$):
\begin{enumerate}[label={IC\arabic*}]
    \item \label{IC1} for $i\in[\ell]$ we have $0\le a_i\le\al_i-1$,
    \item \label{IC2} for $i\in[\ell]$ we have $0\le d_i\le \de_i+1$,
    \item \label{IC3} for $i\in[\ell]$: if $d_i=\de_i+1$ then $a_{i}=\al_{i}-1$ and $d_{i-1}>0$,
    \item \label{IC4}  for $i\in[\ell]$: if $a_{i}>0$, then $d_{i-1}>0$.
\end{enumerate}
Similarly, $\up{\pair{b}{e}}$ being a filter is equivalent to the following conditions:
\begin{enumerate}[label={UC\arabic*}]
    \item \label{UC1} for $i\in[\ell]$ we have $0\le b_i\le\al_i-1$,
    \item \label{UC2} for $i\in[\ell]$ we have $0\le e_i\le \de_i+1$,
    \item \label{UC3} for $i\in[\ell]$: if $e_i=\de_i+1$ then $b_{i+1}=\al_{i+1}-1$ and $e_{i+1}>0$,
    \item \label{UC4}  for $i\in[\ell]$: if $b_{i}>0$, then $e_i>0$.
\end{enumerate}

We now modify~\ref{PH1}--\ref{PH3} for the circular case.
Given $\dn{\pair{a}{d}}$ as in~\eqref{DnCir}, perform the following operations.  In all steps, the indices are taken modulo $\ell$.
\begin{enumerate}[label={PHC\arabic*}]
    \item \label{PHC1} For every $i\in[\ell]$ such that $d_i=1$ and $a_{i+1}<\al_{i+1}-1$, let $d_i:=0$ and $a_{i+1}:=a_{i+1}+1$.
    \item \label{PHC2} If there exists some index $i\in[\ell]$ with $a_{i}<\al_{i}-1$, then split $\spn{d}$ into  factors between $d_{i-1}$ and $d_i$ for each such $i$ and apply $\phi$ to each factor. If no such $i$ exists, then compute $\phib(\dn{d})$.  In both cases, let $e$ be the resulting sequence. Let $b:=a$.
    \item \label{PHC3} For every $i\in[\ell]$ such that $e_i=0$ and $b_i>0$, let  $e_i:=1$ and $b_i:=b_i-1$.
\end{enumerate}
Define 
$$
\Phib(\dn{\pair{a}{d}})=\up{\pair{b}{e}}.
$$

Let us look at two examples which will illustrate the two cases in step~\ref{PHC2}.  First consider the circular fence
$\Fb(\be)$ where $\be=(2,1,2,3,1,2,2,1)$, so
$$
\al:2,2,1,2 \qmq{and} \de:1,3,2,1,
$$
as illustrated in Figure~\ref{EqualCase}.
Let $I=\{x_1,x_2,x_3,x_4,x_5,x_9,x_{12}\}\in\cIb(\be)$, which is encoded by
$$\Dn{\barr{c}a\\ d\earr}=\Dn{\begin{array}{cccccccccc} 1 && 1 && 0 && 0 && 1\\
& 2 && 1 && 1 && 1&
\end{array}}.
$$
This ideal is indicated by the circled nodes in the top poset in Figure~\ref{EqualCase}.
Applying~\ref{PHC1} yields
$$\begin{array}{cccccccccc} 1 && 1 && 0 && 1 && 1\\
& 2 && 1 && 0 && 1&
\end{array}.
$$
In step~\ref{PHC2}, since $a_i=\al_i-1$ for all $i$, we simply apply $\phib$ to the sequence $\dn{d}=\dn{2,1,0,1}$, which gives $\up{e}=\up{1,0,1,2}$. Finally, applying~\ref{PHC3} to
$$\begin{array}{cccccccccc} 1 && 1 && 0 && 1 && 1\\
& 1 && 0 && 1 && 2&
\end{array}
$$
yields
$$\Up{\barr{c}b\\ e\earr}=\Up{\begin{array}{cccccccccc} 1 && 0 && 0 && 1 && 1\\
& 1 && 1 && 1 && 2&
\end{array}},
$$
which encodes the filter $U=\{x_1,x_2,x_3,x_6,x_{10},x_{13},x_{14}\}\in\cUb(\be)$,
as illustrated in the bottom poset in Figure~\ref{EqualCase}.

\begin{figure} \centering
\begin{tikzpicture}[scale=.7]
\draw(-1,4) node{$I=$};
\draw (10,0)--(0,4)--(1,5)--(2,4)--(4,6)--(7,3)--(8,4)--(10,2)--(12,4)--(10,0);
\foreach \position in {(0,4), (1,5), (2,4), (3,5), (7,3), (10,2), (10,0)}
   \draw[red,thick]\position circle(.2);
\foreach \position in {(4,6), (5,5), (6,4), (8,4), (9,3), (11,3), (12,4),(0,4), (1,5), (2,4), (3,5), (7,3), (10,2), (10,0)}
   \fill\position circle(.1);
\draw(10,-.5) node{$x_1$};
\draw(0,4.5) node{$x_2$};
\draw(1,5.5) node{$x_3$};
\draw(2,4.5) node{$x_4$};
\draw(3,5.5) node{$x_5$};
\draw(4,6.5) node{$x_6$};
\draw(5,5.5) node{$x_7$};
\draw(6,4.5) node{$x_8$};
\draw(7,3.5) node{$x_9$};
\draw(8,4.5) node{$x_{10}$};
\draw(9,3.5) node{$x_{11}$};
\draw(10,2.5) node{$x_{12}$};
\draw(11,3.5) node{$x_{13}$};
\draw(12,4.5) node{$x_{14}$};
\draw[|->] (6,.2)-- node[right]{$\Phib$} (6,-.5);

\begin{scope}[shift={(0,-7)}]
\draw(-1,4) node{$U=$};
\draw (10,0)--(0,4)--(1,5)--(2,4)--(4,6)--(7,3)--(8,4)--(10,2)--(12,4)--(10,0);
\foreach \position in {(0,4), (1,5), (4,6), (8,4),  (10,0), (11,3), (12,4)}
   \draw[red,thick]\position circle(.2);
\foreach \position in {(4,6), (5,5), (6,4), (8,4), (9,3), (11,3), (12,4),(0,4), (1,5), (2,4), (3,5), (7,3), (10,2), (10,0)}
   \fill\position circle(.1);
\draw(10,-.5) node{$x_1$};
\draw(0,4.5) node{$x_2$};
\draw(1,5.5) node{$x_3$};
\draw(2,4.5) node{$x_4$};
\draw(3,5.5) node{$x_5$};
\draw(4,6.5) node{$x_6$};
\draw(5,5.5) node{$x_7$};
\draw(6,4.5) node{$x_8$};
\draw(7,3.5) node{$x_9$};
\draw(8,4.5) node{$x_{10}$};
\draw(9,3.5) node{$x_{11}$};
\draw(10,2.5) node{$x_{12}$};
\draw(11,3.5) node{$x_{13}$};
\draw(12,4.5) node{$x_{14}$};
\end{scope}
\end{tikzpicture}
\capt{Computing $\Phib(\{x_1,x_2,x_3,x_4,x_5,x_9,x_{12}\})$ in $\Fb(2,1,2,3,1,2,2,1)$ \label{EqualCase}}
\end{figure}

If instead we apply $\Phib$ to the ideal  $I=\{x_1,x_2,x_3,x_4,x_9,x_{12}\}\in\cIb(\be)$, which is encoded by
$$\Dn{\barr{c}a\\ d\earr}=\Dn{\begin{array}{cccccccccc} 1 && 0 && 0 && 0 && 1\\
& 2 && 1 && 1 && 1&
\end{array}},
$$
step~\ref{PHC1} yields
$$\begin{array}{cccccccccc} 1 && 0 && 0 && 1 && 1\\
& 2 && 1 && 0 && 1&
\end{array}.
$$
Now, in step~\ref{PHC2}, we have $a_2=0<1=\al_2-1$, so we split $\spn{d}$ between $d_1=2$ and $d_2=1$. Applying $\phi$ to the resulting linear factor $1,0,1,2$, we obtain the sequence $0,1,2,1$, and so $e:1,0,1,2$. Finally, applying~\ref{PHC3} to
$$\begin{array}{cccccccccc} 1 && 0 && 0 && 1 && 1\\
& 1 && 0 && 1 && 2&
\end{array}
$$
does not produce any change, and so
$$\Up{\barr{c}b\\ e\earr}=\Up{\begin{array}{cccccccccc} 1 && 0 && 0 && 1 && 1\\
& 1 && 0 && 1 && 2&
\end{array}},
$$
which encodes the filter $U=\{x_1,x_2,x_3,x_{10},x_{13},x_{14}\}\in\cUb(\be)$.

To prove that $\Phib$ is bijective, we will use the definition of reversal for ideals given by~\eqref{dnR}, remembering that for circular fences $a_1$ appears twice in $a$.  So,
$$
\Dn{\begin{array}{cccccccccc} a_1 && a_2 &&\cdots&&a_{\ell}&& a_1\\
& d_1 && d_2 &&\cdots&& d_\ell &
\end{array}}^R =
\Up{\begin{array}{cccccccccc} a_1 && a_\ell &&\cdots&&a_2&& a_1\\
& d_{\ell} && d_{\ell-1} &&\cdots&& d_1 &
\end{array}}.
$$
Similarly, reversal for filters is given by~\eqref{upR}.
\begin{thm}
Let $\be=(\be_1,\be_2,\ldots,\be_{2\ell})$. The map $\Phib:\cIb(\be)\ra \cUb(\be)$  defined by~\ref{PHC1}--\ref{PHC3} is a cardinality-preserving bijection.
\end{thm}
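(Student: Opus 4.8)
The plan is to imitate the proof of Theorem~\ref{thm:Phi} almost line for line: first show that $\Phib$ is well defined and cardinality-preserving, and then produce $\Phib^{-1}$ as the conjugate of $\Phib$ by the circular reversal of~\eqref{dnR}--\eqref{upR}. The only genuinely new ingredient is the dichotomy in step~\ref{PHC2}. If some index satisfies $a_i<\al_i-1$, we cut the circular sequence $\spn{d}$ at every such split and feed the resulting linear factors to $\phi$; if instead $a_i=\al_i-1$ for all $i$, there is nowhere to cut and we feed $\spn{d}$ as a whole to the circular map $\phib$. Consequently every part of the argument carries a ``some split'' case and a ``no split'' case.

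To see that $\phi$ and $\phib$ may legitimately be applied, observe that in the split case each factor is linear and both of its endpoints are determined by split points, hence are \emph{internal} in the sense of the linear proof: at a left endpoint $d_i$ we have $a_i<\al_i-1$, so~\ref{IC3} forces $d_i\le\de_i$; at a right endpoint $d_{i-1}$, if $d_{i-1}=1$ then (as $a_i$ only grows under~\ref{PHC1}) we had $a_i<\al_i-1$ to begin with and~\ref{PHC1} would already have reset $d_{i-1}$ to $0$, so $d_{i-1}\ne1$. Thus each factor obeys~\ref{I1}--\ref{I3}, and — worth emphasizing — \emph{no restriction on the size $k$ is needed}, since a circular fence has no distinguished first or last segment, so the boundary cases that forced the hypothesis~\eqref{kle} in Theorem~\ref{thm:Phi} never arise. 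In the no-split case~\ref{PHC1} is vacuous and $\spn{d}$ satisfies~\ref{ICN1}--\ref{ICN2} straight from~\ref{IC2}--\ref{IC3}, so $\phib$ applies. Cardinality is preserved because $\phi$, $\phib$, and the reassignments in~\ref{PHC1} and~\ref{PHC3} all preserve it, and I would then check~\ref{UC1}--\ref{UC4} by mirroring the verification of~\ref{UF1}--\ref{UF4} in Theorem~\ref{thm:Phi}, reading all indices modulo $\ell$.

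For bijectivity I would set $\Phib^{-1}(\up{\pair{b}{e}})=\Phib(\up{\pair{b}{e}}^R)^R$, noting that $\up{\pair{b}{e}}$ is a filter of $\Fb(\be)$ iff $\up{\pair{b}{e}}^R$ is an ideal of $\Fb(\be^R)$ by matching~\ref{UC1}--\ref{UC4} against~\ref{IC1}--\ref{IC4}, and then argue that $\Phib^{-1}$ undoes $\Phib$ step by step: just as in the linear case~\ref{PHC1} and~\ref{PHC3} are reverse-dual, while~\ref{PHC2} is self-cancelling. The crux, and the main obstacle, is the circular break-point lemma: one must show that the splits of $\spn{d}$ used by $\Phib$ match, under reversal, the splits of $\spn{e^R}$ used by $\Phib^{-1}$, now with all indices modular. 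The delicate interaction is with the dichotomy. When $\Phib$ takes the no-split branch (all $a_i=\al_i-1$), step~\ref{PHC3} may lower some $b_i$ to $\al_i-2$; but in the reverse computation~\ref{PHC1} fires first and, by the reverse-duality, restores these entries to $\al_i-1$ before~\ref{PHC2} looks for splits, so the reverse computation also finds no split and correctly re-enters the $\phib$ branch. In that branch the cancellation of the two middle steps is precisely the identity $\phib^{-1}=R\circ\phib\circ R$ supplied by the (omitted) proof that $\phib$ is a bijection. Granting the break-point matching, the step-by-step cancellation then goes through exactly as before, and the computation $\Phib(\Phib^{-1}(\up{\pair{b}{e}}))=\up{\pair{b}{e}}$ completes the proof.
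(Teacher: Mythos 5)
Your overall strategy is the same as the paper's: treat step~\ref{PHC2} as a dichotomy, reduce the split case to the argument for $\Phi$ in Theorem~\ref{thm:Phi}, handle the no-split case via $\phib$, and define $\Phib^{-1}$ by conjugating with reversal. Your justification that each linear factor in the split case satisfies~\ref{I1}--\ref{I3} (and your observation that no bound on $k$ is needed because a circular fence has no distinguished end segments) matches the intended argument, and your remark about why the reverse computation re-enters the $\phib$ branch addresses a point the paper leaves to the reader.

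There is, however, one concretely false claim: ``in the no-split case \ref{PHC1} is vacuous.'' The no-split condition (all $a_i=\al_i-1$) is tested \emph{after} \ref{PHC1} has run, and \ref{PHC1} can raise an $a_{i+1}$ from $\al_{i+1}-2$ to $\al_{i+1}-1$, so the branch can be entered precisely \emph{because} \ref{PHC1} acted. The paper's own first example (Figure~\ref{EqualCase}, with $\al_4-1=1$ and $a_4=0$ initially) does exactly this: \ref{PHC1} sets $d_3:=0$ and $a_4:=1$, and only then does the all-equal condition hold. Your conclusion that $\phib$ may be applied survives, but it needs a different justification: \ref{PHC1} only lowers entries of $d$ from $1$ to $0$, so \ref{ICN1} is preserved, and it cannot zero out $d_{i-1}$ when $d_i=\de_i+1$ (since \ref{IC3} then forces $a_i=\al_i-1$, blocking the move), so \ref{ICN2} is preserved as well. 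You should also note, as the paper does, the degenerate subcase where $d$ is entirely positive after \ref{PHC1}: there $\phib$ is the identity on $\dn{d}$ and \ref{PHC3} exactly undoes \ref{PHC1}, so $\Phib$ is the identity on encodings. With these repairs your outline coincides with the paper's proof.
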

\begin{proof}
We will use the notation we have established above. If there is an index $i$ in step \ref{PHC2} with $a_i<\al_i-1$, then this map is very similar to $\Phi$.  The proof in this case essentially follows the lines of that of Theorem~\ref{thm:Phi}, and so we omit the details.

Assume now that, in step \ref{PHC2}, we have $a_i=\al_i-1$ for all $i$. There are two possibilities depending on whether $d$ is positive or not.  First consider what happens if $d$ is positive.  In this case 
$\phib(\dn{d})=\up{d}$, so that in step~\ref{PHC2} we have $e:=d$.  Since $d$ does not change, \ref{PHC3} will undo what was done in~\ref{PHC1}, so that $b=a$.
Thus, in this case $\Phib$ is the identity map at the level of encodings.  It is now easy to check that this map is well defined, and trivial that it is a bijection.

Now suppose that $d$ is not positive.  Clearly $\Phib$ preserves cardinality, because so does $\phib$, and in the first and last steps the changes take place in pairs, with one element increasing by one and the other decreasing by the same amount.  We need to show that $\Phi$ is well defined in that $\up{\pair{b}{e}}\in\cUb(\be)$.  So we need to check~\ref{UC1}--\ref{UC4}.

Conditions~\ref{UC1} and~\ref{UC2} are true because of the bounds and equalities which must be satisfied in steps~\ref{PHC1} and~\ref{PHC3} before making the assignments, and because in~\ref{PHC2} we know that $\phib(\dn{d})$ satisfies~\ref{UCN2}.
To check~\ref{UC3}, we assume $e_i=\de_i+1$ at the end of \ref{PHC3}, and thus also at the end of \ref{PHC2}. Since $\phib(\dn{d})$ satisfies~\ref{UCN2}, we have that $e_{i+1}>0$ after \ref{PHC2}, and thus also after \ref{PHC3}. 
For the other assertion in~\ref{UC3} assume, towards a contradiction, that $b_{i+1}<\al_{i+1}-1$.  Now the value of $b_{i+1}$ was not changed in~\ref{PHC3} because $e_{i+1}>0$ after \ref{PHC2}.  So $a_{i+1}=b_{i+1}<\al_{i+1}-1$, which contradicts the fact that
$a_i=\al_i-1$ for all $i$.  We also handle~\ref{UC4} by contradiction, assuming that
$b_i>0$ but $e_i=0$.  Under these circumstances, $e_i$ would have been reassigned to be $1$ in~\ref{PHC3}.  Thus we have shown that all four conditions for a filter are satisfied.

Finally, we define $\Phib^{-1}$ by~\eqref{eq:Phiinv} with $\Phi$ replaced by $\Phib$.    The demonstration that this is well defined and indeed the inverse of $\Phib$ is much the same as the proof for $\Phi$,  and so left to the reader.
\end{proof}


\section{Comments and open questions}
\label{coq}

This section is devoted to some remarks and a number of open questions which we hope the reader will be interested in pursuing.

\subsection{Extending the bijections}

Even though the map $\Phi:\cI_k(\be)\ra \cU_k(\be)$ from Theorem~\ref{thm:Phi} is not well defined when condition~\eqref{kle} does not hold, it is possible to extend it to any value of $k$ if we restrict the map to a particular subset of ideals, namely those for which $\phi$ can be applied in step \ref{PH2}. 
We continue to use the notation established at the beginning of Subsection~\ref{bpf}.
We say that an ideal of $F(\be)$ encoded by $\dn{\pair{a}{d}}$ is {\em restricted} if, in addition to \ref{IF1}--\ref{IF4}, it satisfies the two conditions:
\begin{enumerate}
    \item[IF5] $d_1\le\de_1$,
    \item[IF6] either $d_\ell\neq1$ or $a_{\ell+1}< \al_{\ell+1}-1$.
\end{enumerate}
Note that $I=\dn{d_1,d_2,\ldots,d_\ell}$ is a restricted ideal of the gate $G(\de_1,\de_2,\ldots,\de_\ell)$ if and only if 
$$
I'=\Dn{\begin{array}{cccccccccc} 1 && 0 &&\cdots&& 0&& 1\\
& d_1 && d_2 &&\cdots&& d_\ell &
\end{array}}
$$
is a restricted ideal of the fence $F(1,\de_1,1,\de_2,\ldots,\de_\ell,1)$.
Indeed, IF5 is the first condition in~\ref{I3}.
And since $\al_{\ell+1}-1=1$, we have $a_{\ell+1}=\al_{\ell+1}-1$, and so IF6 reduces to the second condition in~\ref{I3}.

Similarly, we say that a filter of $F(\be)$ encoded by $\up{\pair{b}{e}}$ is {\em restricted} if, in addition to \ref{UF1}--\ref{UF4}, it satisfies:
\begin{enumerate}
    \item[UF5] $e_\ell\le \de_\ell$,
    \item[UF6] either $e_1\neq 1$ or $b_1<\al_1-1$.
\end{enumerate}
Denote by $\cIr_k(\be)$ and $\cUr_k(\be)$ the subsets of restricted ideals in $\cI_k(\be)$ and restricted filters in $\cU_k(\be)$, respectively. The reader should keep in mind that this notation refers to restricted ideals and filters in fences, not gates.
If $k$ satisfies~\eqref{kle}, then conditions~IF5--IF6
and~UF5--UF6 always hold, and so $\cIr_k(\be)=\cI_k(\be)$ and $\cUr_k(\be)=\cU_k(\be)$ in this case. 

A slight adaptation of the proof of Theorem~\ref{thm:Phi} demonstrates the following.

\begin{thm}\label{thm:Phi-anyk}
Let $\be=(\be_1,\be_2,\ldots,\be_{2\ell+1})$. For any $k$, 
the map $\Phi:\cIr_k(\be)\ra \cUr_k(\be)$  defined by~\ref{PH1}--\ref{PH3} is a bijection.\hqed
\end{thm}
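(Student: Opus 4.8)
The plan is to retrace the proof of Theorem~\ref{thm:Phi} verbatim, isolating the two places where the hypothesis~\eqref{kle} was used and replacing each by one of the new restricted-ideal conditions IF5 and IF6. Recall that~\eqref{kle} entered only in checking that $\phi$ may legitimately be applied to the factors created in~\ref{PH2}: once to rule out $d_1=\de_1+1$ in the first factor, and once to rule out a surviving $d_\ell=1$ in the last factor. The condition IF5, namely $d_1\le\de_1$, gives $d_1\neq\de_1+1$ outright, so the first factor satisfies~\ref{I3}. The condition IF6, namely $d_\ell\neq1$ or $a_{\ell+1}<\al_{\ell+1}-1$, handles the last factor exactly as before: if $d_\ell\neq1$ there is nothing to check, while if $a_{\ell+1}<\al_{\ell+1}-1$ then a value $d_\ell=1$ would already have been zeroed by~\ref{PH1}. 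The interior splits are governed by~\ref{IF3} and~\ref{PH1} as in the original argument, so $\phi$ applies to every factor with no reference to $k$.

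With $\phi$ in hand, the verification that $\Phi(\dn{\pair{a}{d}})=\up{\pair{b}{e}}$ preserves cardinality and satisfies~\ref{UF1}--\ref{UF4} is word-for-word that of Theorem~\ref{thm:Phi}, since that portion never invoked~\eqref{kle}. What remains for well-definedness is to check the two new filter conditions UF5 and UF6. For UF5 I would note that $\phi$ outputs restricted filters of the relevant sub-gate, so~\ref{U3} already gives $e_\ell\le\de_\ell$ after~\ref{PH2}; step~\ref{PH3} can only raise $e_\ell$ from $0$ to $1$, and $\de_\ell=\be_{2\ell}\ge1$, so $e_\ell\le\de_\ell$ persists. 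For UF6 I would argue by cases on $e_1$ after~\ref{PH2}: by~\ref{U3} we have $e_1\neq1$ at that stage, so either $e_1\ge2$, in which case~\ref{PH3} leaves it unchanged and $e_1\neq1$, or $e_1=0$. In the latter case, if $b_1=a_1=0$ then~\ref{PH3} does nothing and $e_1=0\neq1$, whereas if $b_1>0$ then~\ref{PH3} sets $e_1=1$ while lowering $b_1$ to $a_1-1$, which is $<\al_1-1$ by~\ref{IF1} (recall~\ref{PH1} never alters $a_1$). In every case one of the two alternatives of UF6 holds.

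Bijectivity is again obtained through the reversal construction~\eqref{eq:Phiinv}, $\Phi^{-1}(\up{\pair{b}{e}})=\Phi(\up{\pair{b}{e}}^R)^R$. The one genuinely new point to record is that reversal carries restricted filters to restricted ideals of the reversed fence: comparing UF5 with IF5 and UF6 with IF6, one sees that $\up{\pair{b}{e}}^R=\dn{\pair{b^R}{e^R}}$ lies in $\cIr_k(\be^R)$ precisely when $\up{\pair{b}{e}}\in\cUr_k(\be)$, so $\Phi^{-1}$ is well defined on $\cUr_k(\be)$. The remaining step-by-step inverse argument—matching the break points of $d$ with those of $e^R$, observing that~\ref{PH1} for $\Phi^{-1}$ undoes~\ref{PH3} for $\Phi$ and vice versa, and invoking that $\phi$ and $\phi^{-1}$ cancel—transfers without change, as none of it used~\eqref{kle}. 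The main obstacle, and the only new content, is the bookkeeping for UF5 and UF6 on the image together with the observation that reversal interchanges the pairs $\{$IF5, IF6$\}$ and $\{$UF5, UF6$\}$; everything else is inherited verbatim from Theorem~\ref{thm:Phi}.
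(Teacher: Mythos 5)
Your proposal is correct and is precisely the ``slight adaptation'' of the proof of Theorem~\ref{thm:Phi} that the paper invokes without writing out: the paper's argument uses~\eqref{kle} only to verify~\ref{I3} for the first and last factors in~\ref{PH2}, and you correctly substitute IF5 and IF6 there, check UF5--UF6 on the image, and note that reversal interchanges the restricted conditions so that~\eqref{eq:Phiinv} remains well defined. Nothing further is needed.
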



\begin{question} 
Is it possible to give an injective proof of Theorem~\ref{heavy} using a variant of $\Phi$?
\end{question}

For example, Theorem~\ref{thm:Phi-anyk} reduces the problem of comparing the number of ideals and filters of size $k$ to the special case of ideals and filters that fail to satisfy~IF5--IF6 and~UF5--UF6. Ideals that fail to satisfy IF5 (respectively, IF6) are in bijection with ideals of the fence obtained by removing the first (respectively, last) two segments, and similarly for filters.

In a similar vein, we wonder whether it is possible to use a variant of $\Phib$ to give an injective proof of the following conjecture of O\u{g}uz and Ravichandran.

\begin{conj}[\cite{OR:rpf}]
\label{FbUni}
If  $\be=(\be_1,\be_2,\ldots,\be_{2\ell})$ then $\rb(\be)$ is unimodal except when 
 $\be = (1,k,1,k)$ or $(k,1,k,1)$ for some $k\ge1$.
\end{conj}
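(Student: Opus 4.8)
The plan is to pursue the injective strategy suggested just above, building a variant of $\Phib$. By Theorem~\ref{FbSym} the rank sequence $\rb(\be):\rb_0,\rb_1,\dots,\rb_n$ is symmetric, so it is unimodal exactly when its first half is weakly increasing; thus it suffices to prove $\rb_k\le\rb_{k+1}$ for all $k<n/2$. Writing $\cIb_k(\be)$ for the set of size-$k$ ideals of $\Fb(\be)$, so that $\rb_k=\#\cIb_k(\be)$, the goal becomes the construction, for each such $k$, of an injection $\Psi_k:\cIb_k(\be)\hookrightarrow\cIb_{k+1}(\be)$. Throughout I would use the encoding $\dn{\pair{a}{d}}$ of circular ideals, under which the size is $\sum_i a_i+\sum_i d_i$ and validity is governed by \ref{IC1}--\ref{IC4}.

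The core device would be a single-element analogue of the circular map $\Phib$. Directly incrementing a coordinate of $\dn{\pair{a}{d}}$ is not injective, so instead I would run the block-and-trailing-ones rearrangement of $\phib$ on the circular sequence $\spn{d}$ essentially as in steps \ref{PHC1}--\ref{PHC3}, but deliberately break its cardinality-preserving symmetry by inserting one extra unit at the canonical site the rearrangement already distinguishes --- for instance at the left end of the leftmost block-remainder $B'$, mirroring the way \ref{P2} treats the two ends of each $B'$ asymmetrically. The ideal conditions \ref{IC1}--\ref{IC4} ought to be preserved by the same case analysis used to prove that $\Phib$ is well defined, and injectivity should follow, as for $\phi$ and $\Phib$, by showing that the image together with the recorded location of the inserted unit determines the preimage.

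The main obstacle is injectivity at the ranks just below the center. When $k$ is close to $n/2$ the target $\cIb_{k+1}(\be)$ is only marginally larger than $\cIb_k(\be)$, so the insertion rule has almost no slack and any ambiguity in the ``canonical site'' is fatal. I expect this is precisely where the two exceptional families arise. For $\be=(1,k,1,k)$ one has $\al=(1,1)$ and $\de=(k,k)$, and dually $\al=(k,k)$, $\de=(1,1)$ for $(k,1,k,1)$; in either case the circular fence admits an order-two rotational symmetry under which the encoding $\dn{\pair{a}{d}}$ is invariant, so at the central rank the eligible insertion sites occur in indistinguishable pairs and no insertion map can be injective there. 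This matches the oscillation one computes directly, for example $\rb(1,1,1,1):1,2,1,2,1$ and $\rb(1,2,1,2):1,2,3,2,3,2,1$, in which only the central comparison $\rb_{n/2-1}\le\rb_{n/2}$ fails. The plan would therefore be to show that such a rotationally symmetric obstruction forces $\ell=2$ together with $\al_1=\al_2=1$ or $\de_1=\de_2=1$, thereby isolating exactly $(1,k,1,k)$ and $(k,1,k,1)$, and to prove that $\Psi_k$ is injective in every other case.

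As a fallback I would keep an algebraic route in reserve, paralleling O\u{g}uz and Ravichandran. Cutting the cycle at the glued minimal vertex $v$ and splitting ideals according to whether $v\in I$ expresses $\rb(q;\be)$ as a combination of rank generating functions of the linear fences obtained by deleting the segments incident to $v$. These linear polynomials are top- or bottom-interlacing by Theorem~\ref{heavy}, and recombining them --- using the symmetry of Theorem~\ref{FbSym} to align their centers --- should yield unimodality, with the two exceptional families emerging as exactly the configurations in which the two pieces are forced to peak off-center in opposite directions.
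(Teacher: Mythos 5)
First, a point of comparison: the statement you are proving is presented in the paper as Conjecture~\ref{FbUni} and is left \emph{open} there --- the authors only ``wonder whether it is possible to use a variant of $\Phib$ to give an injective proof'' --- so there is no proof in the paper to measure yours against. Your proposal follows exactly that suggested direction, but as written it is a research plan rather than a proof: the phrases ``ought to be preserved,'' ``should follow,'' and ``I expect'' sit precisely where the mathematical difficulty lives.

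Concretely, three gaps. (1) The injection $\Psi_k:\cIb_k(\be)\hookrightarrow\cIb_{k+1}(\be)$ is never constructed. The ``canonical site'' for the extra unit is not well defined on a circular sequence --- $\spn{d}$ has no leftmost block-remainder $B'$ without choosing a base point, and any such choice must be shown compatible with the splitting in \ref{PHC2}, with \ref{IC3} (which couples $d_i=\de_i+1$ to $a_i=\al_i-1$ and $d_{i-1}>0$), and with the positive-$d$ case where $\phib$ acts as the identity. Moreover, ``the image together with the recorded location of the inserted unit determines the preimage'' is not injectivity unless the location is recoverable from the image alone; that recoverability is the whole point and is unaddressed. (2) The analysis of the exceptional families is heuristic in both directions: a rotational symmetry of $\Fb(\be)$ only obstructs \emph{equivariant} injections, not injections, so it neither proves failure for $(1,k,1,k)$ and $(k,1,k,1)$ in general nor shows these are the only failures; the latter requires proving $\rb_k\le\rb_{k+1}$ for all $k<n/2$ for every other $\be$, which is the entire content of the conjecture. (3) The algebraic fallback is essentially the route O\u{g}uz and Ravichandran used to prove Theorem~\ref{FbSym}, and the reason Conjecture~\ref{FbUni} remains open is exactly that a sum of a top-interlacing and a bottom-interlacing polynomial with misaligned centers need not be unimodal; asserting that recombination ``should yield unimodality'' assumes the conclusion. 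The only parts of your argument that are actually complete are the correct computations $\rb(1,1,1,1):1,2,1,2,1$ and $\rb(1,2,1,2):1,2,3,2,3,2,1$, which verify failure of unimodality in two instances of the exceptional families.
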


\subsection{Log-concavity}

Another important property of some real sequences is log-concavity.  Call
$a_0,a_1,\ldots,a_n$ {\em log-concave} if
$$
a_i^2 \ge a_{i-1} a_{i+1}
$$
for all $0<i<n$.  It is well known, and easy to prove, that if a sequence contains only positive reals then log-concavity implies unimodality.  It is not true that $r(\be)$ is always log-concave, as can be seen in the example after Conjecture~\ref{MGO} where $\be=(1,1)$  and $r(\be):1,2,1,1$.  
It is also possible for $\rb(\be)$ to be unimodal, but not log-concave; for example, when $\be=(1,1,1,1,1,1)$, we have $\rb(\be):1,3,3,4,3,1$.  This raises the following question.
\begin{question}
For which $\be$ are  $r(\be)$ or $\rb(\be)$ log-concave?  Even if the whole sequence is not log-concave, is there a long portion of it which is?
\end{question}

\subsection{Chain decompositions}

In~\cite{MSS:ruc}, McConville, Sagan, and Smyth made another conjecture which implies Theorem~\ref{heavy} but remains open.  It has to do with certain chain decompositions of posets.  Let $(P,\lte)$ be a poset.  If $x,y\in P$ then an {\em $x$--$y$ chain} in $P$ is a totally ordered subset $C:x_1\lt x_2\lt\ldots\lt x_l$  with $x=x_1$ and $y=x_l$.
Call $C$ {\em saturated} if $x_{i+1}$ covers $x_i$ for all $1\le i<l$.
A {\em chain decomposition} (CD) of $P$ is a partition $P=\uplus_i C_i$ where the $C_i$ are saturated chains.  

Suppose now that $P$ is ranked with rank function $\rk$.  The {\em center} of a saturated $x$--$y$ chain $C$ is the average
$$
\cen C = \frac{\rk x + \rk y}{2}.
$$
Let $n$ be the maximum rank of an element of $P$.  Call a saturated chain {\em symmetric} if $\cen C = n/2$.  A {\em symmetric chain decomposition} or SCD is a chain decomposition all of whose chains are symmetric.  It is easy to see that if $P$ admits an SCD then its rank sequence is symmetric and unimodal. Having an SCD also implies that $P$ has the strong Sperner property as discussed in the survey article of Greene and Kleitman~\cite{GK:ptt}.  Greene and Kleitman also gave  a famous SCD  of the Boolean algebra of all subsets of a finite set~\cite{GK:svs}.

There is an analogue of SCDs for top and bottom interlacing rank sequences.
As in the previous paragraph, let $P$ be ranked with maximum rank $n$.
Call a chain decomposition of $P$ {\em top centered}, or a {\em TCD}, if for every chain $C$ in the decomposition we have $\cen C = n/2$ or $(n+1)/2$.  Again, a simple argument shows that if $P$ has a TCD then its rank sequence is top interlacing.  Similarly, a {\em bottom centered chain decomposition}, or {\em BCD}, has all chains satisfying $\cen C = n/2$ or $(n-1)/2$.  As expected, this property implies a bottom interlacing rank sequence.  
\begin{conj}[\cite{MSS:ruc}]
The lattice $L(\be)$ admits either an SCD, BCD, or TCD consistent with Theorem~\ref{heavy}.
\end{conj}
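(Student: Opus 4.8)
The plan is to prove this stronger statement by induction on the number of segments $s$, peeling the two extreme segments so as to pass from $\be$ to $\be'=(\be_2,\ldots,\be_{s-1})$ --- exactly the reduction underlying case~(c)(iii) of Theorem~\ref{heavy}. The base cases $s=1$ (where $L(\be)$ is a single chain, trivially an SCD) and $s=2$ (where $L(\be)$ is a planar staircase region admitting an explicit BCD) are handled directly. For the inductive step I would stratify $L(\be)$ by recording, for an ideal $I$ of $F(\be)$, the pair $(a_1,a_{\ell+1})$ measuring how far $I$ climbs the unshared parts of the first and last ascending segments, in the notation of Subsection~\ref{bpf}. Each stratum is order-isomorphic to an interval of $L(\be')$, sitting inside $L(\be)$ with a rank offset governed by $\be_1$ and $\be_s$; by the inductive hypothesis each such interval carries a chain decomposition of the type prescribed by $r(\be')$, and the top$\leftrightarrow$bottom swap built into case~(c)(iii) tells us exactly which type to expect once the two flanks are reattached.

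The second step is to thread these per-stratum chains into saturated chains of $L(\be)$. Consecutive strata are joined by covering relations coming from the covers along the first and last segments of $F(\be)$, so a chain living in one stratum can be concatenated with a chain in the next, lengthening it symmetrically in the style of the de Bruijn--Tengbergen--Kruyswijk bracketing used for products of chains. The bijective machinery of the paper enters here as a consistency device: the maps $\phi$ and $\Phi$ of Theorems~\ref{thm:phi} and~\ref{thm:Phi} pair rank $k$ with rank $n-k$ precisely for $k\le\min\{\be_1,\be_s\}$, which are exactly the ranks realized by the endpoints of the outermost threaded chains. Thus $\Phi$ should be recoverable as ``read off the two ends of each extreme chain,'' and conversely it can be used to construct those outer chains and to certify that they have the correct centers.

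The hard part will be \textbf{center control}. A chain lifted from $L(\be')$ is centered at the center of $L(\be')$, but attaching the first and last segments shifts the global rank window, and when $\be_1\neq\be_s$ it shifts it \emph{asymmetrically}; each lifted chain must then be extended at its bottom and/or its top by exactly the right number of covers so that its center $\cen C$ lands at $n/2$ (for an SCD), $(n-1)/2$ (for a BCD), or $(n+1)/2$ (for a TCD), with every element of $L(\be)$ used exactly once. I expect this to reduce to a Hall-type feasibility condition balancing the ``extension slots'' available on the two flanks against the demand imposed by the interlacing inequalities of Theorem~\ref{heavy}; producing a single canonical construction that simultaneously covers the bottom-interlacing, top-interlacing, and symmetric regimes of case~(c) is the principal obstacle, and is why the statement remains conjectural. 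A softer alternative route would be to prove that $L(\be)$ has the \textbf{normalized matching property} and then invoke Griggs' theorem (normalized matching together with rank symmetry and unimodality yields a symmetric chain order) in the symmetric case, along with the corresponding top- and bottom-interlacing analogues in the asymmetric cases; the unimodality and interlacing inputs are already supplied by Theorem~\ref{heavy}, so on this route the bottleneck instead becomes verifying normalized matching for $L(\be)$, a property that fails for $J(P)$ in general and would itself have to be extracted from the fence structure.
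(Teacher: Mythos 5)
There is no proof here to compare against, and none in the paper either: the statement you were handed is an \emph{open conjecture}, recorded in Section~\ref{coq} with attribution to \cite{MSS:ruc}. The paper reports only that McConville, Sagan, and Smyth established it when $\be$ has at most three parts or has the form $(k,1,k,1,\ldots,k,1,l)$ with $1\le l\le k$, via modifications of the Greene--Kleitman SCD of the Boolean lattice \cite{GK:svs}. Your proposal, to its credit, concedes the same thing: the ``center control'' step --- extending each lifted chain at its two ends so that $\cen C$ lands at $n/2$, $(n-1)/2$, or $(n+1)/2$ while partitioning all of $L(\be)$ --- is precisely the open content of the conjecture, and labeling it ``the principal obstacle'' does not discharge it. Everything preceding that step is scaffolding whose value depends on a construction you never supply, so as a proof the submission has a genuine gap at its core.

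Two places where the scaffolding itself is also shaky. First, the strata of $L(\be)$ obtained by fixing $(a_1,a_{\ell+1})$ are not simply intervals of $L(\be')$: conditions \ref{IF3} and \ref{IF4} couple the flanks to the interior (for instance $d_1=\de_1+1$ forces $a_1=\al_1-1$, and $a_2>0$ forces $d_1>0$), so the fibers of your stratification change shape with the boundary values, and the claimed order-isomorphism requires an argument you do not give. Relatedly, $\Phi$ of Theorem~\ref{thm:Phi} pairs ranks $k$ and $n-k$ only for $k\le\min\{\be_1,\be_s\}$, so it can certify centers only for the outermost chains; it says nothing about chains through the middle ranks, which is exactly where the interlacing inequalities of Theorem~\ref{heavy} bite. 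Second, the normalized-matching route is not available as stated: Griggs' theorem yields an SCD only for posets with a \emph{symmetric} unimodal rank sequence, and $r(\be)$ is typically not symmetric (e.g.\ $r(1,1):1,2,1,1$), while the ``top- and bottom-interlacing analogues'' of Griggs' theorem that you invoke do not exist --- even the nested chain decomposition conjecture for normalized matching posets is open. That route therefore trades one open problem for two: proving the analogue theorems, and verifying normalized matching for $L(\be)$, a property that fails for distributive lattices in general. The paper's own discussion of lexicographic chain decompositions underlines the true difficulty: empirically a great many linear extensions produce a CD of the desired type, yet no uniform certificate for the chain centers is known, and that certificate is what any successful proof must produce.
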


McConville, Sagan, and Smyth were able to prove this conjecture using modifications of the Greene-Kleitman SCD whenever $\be$ has a most three parts or is of the form $\be=(k,1,k,1,\ldots,k,1,l)$ for some $1\le l\le k$.
Frustratingly, there seems to be an inductive procedure which always produces a CD of the desired type for $F(\be)$, even though it has not been possible to prove that it always works.  Let $P$ be any finite poset and let $L$ be the corresponding distributive lattice of lower order ideals.  Let $x_1,x_2,\ldots,x_n$ be a linear extension of $P$.  Then any subset of $P$ can be written as an increasing sequence with respect to this extension.  
For example,  the fence $F(2,4,1)$ in Figure~\ref{F(2,4,1)} has linear extension $$
x_7,x_8,x_6,x_5,x_4,x_1,x_2,x_3
$$
which would associate the ideal $I=\{x_1,x_6,x_7,x_8\}$
with the sequence $x_7,x_8,x_6,x_1$.
So any two subsets can now be compared using lexicographic order on their sequences.  A corresponding {\em lexicographic chain decomposition} or {\em LCD} is $L=C_1 \uplus \ldots\uplus C_l$  obtained as follows.  Suppose $C_1,\ldots,C_{i-1}$ have been constructed and let $L'=C_1\uplus \ldots\uplus C_{i-1}$.  We now construct $C_i:I_1\lt I_2\lt\ldots\lt I_j$.  Suppose that the smallest rank of an element of the set difference $L-L'$ is $r$.
Choose the lexicographically smallest element of $L-L'$ having rank $r$ 
to be $I_1$.  Let $I_2$ be the lexicographically smallest element
of $L-L'$ which covers $I_1$.  Continue in this way until it is not possible to pick a covering element of the current ideal for $C_i$ from $L-L'$, at which point the chain terminates.  We iterate this construction until all elements of $L$ are in a chain.
\begin{conj}[\cite{MSS:ruc}]
For any $\be$, there is a linear extension of $F(\be)$ whose corresponding LCD is an SCD, BCD, or TCD of $L(\be)$ consistent with Theorem~\ref{heavy}.
\end{conj}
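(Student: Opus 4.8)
The plan is to prove this by induction on the number of parts $s$, using the recursive structure already present in Theorem~\ref{heavy}. For $s=1$ the lattice $L(\be)$ is a chain, so any linear extension gives a single symmetric chain; the cases $s\le 3$ (together with the family $(k,1,k,1,\dots,k,1,l)$) are exactly those already handled by McConville, Sagan, and Smyth, so the real content is the inductive step. Fix $\be=(\be_1,\dots,\be_s)$ and set $\be'=(\be_2,\dots,\be_{s-1})$, which has $s-2$ parts. When $s$ is even, Theorem~\ref{heavy}(b) tells us to aim for a BCD; when $s$ is odd, Theorem~\ref{heavy}(c) dictates which of SCD, BCD, or TCD we must produce, and does so in terms of the type already known for $L(\be')$. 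First I would fix a linear extension of the inner fence $F(\be')$ whose LCD realizes the correct type for $L(\be')$, and then extend it to a linear extension of $F(\be)$ by prepending the first ascending segment $\tA_1$ (of length $\be_1$) and appending the last segment (of length $\be_s$) in a prescribed interleaving, so that the two outer segments act on the inner decomposition in a controlled, nearly symmetric way.

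Next I would analyze how a single saturated chain of the inner LCD lifts once the outer segments are attached. The asymmetry between $\be_1$ and $\be_s$ is precisely what Theorem~\ref{heavy}(c) uses to separate bottom interlacing ($\be_1>\be_s$), top interlacing ($\be_1<\be_s$), and the recursive symmetric case ($\be_1=\be_s$). The hope is that each inner chain lifts to a family of outer chains whose centers are all translated by $(\be_1+\be_s)/2$, together with a correction of $\pm\tfrac12$ governed by the sign of $\be_1-\be_s$, thereby converting an inner SCD into an outer CD of exactly the predicted type. The encodings $\dn{\pair{a}{d}}$ and $\up{\pair{b}{e}}$ from Subsection~\ref{bpf} give a clean bookkeeping device: the lexicographic order on ideals induced by the chosen linear extension should be compatible with a lexicographic order on these interlaced sequences, so that the greedy chains can be tracked entry by entry.

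It is natural to try to build the decomposition directly from the bijection $\Phi$ of Theorem~\ref{thm:Phi} and its unrestricted extension in Theorem~\ref{thm:Phi-anyk}. Those maps already pair each ideal of size $k$ with a filter of size $k$, i.e.\ with an ideal of size $n-k$, which is exactly the matching of rank $k$ with rank $n-k$ that a symmetric (or top/bottom shifted) chain realizes at its two ends. I would attempt to promote $\Phi$ into a chain decomposition by using it to match the bottom half of the rank sequence to the top half and then threading saturated chains through the intermediate ranks consistently with these matched endpoints; the step-by-step invertibility of $\Phi$ established in the proof of Theorem~\ref{thm:Phi} is encouraging evidence that such a coherent threading exists.

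The main obstacle is controlling the greedy lexicographic rule itself, which is global rather than local: a single LCD chain may pass through many segments, and its terminal ideal—hence its center—depends delicately on which elements earlier chains have already consumed. The authors themselves observe that an inductive procedure appears always to yield a CD of the desired type yet resists proof, and this is the heart of the difficulty. I expect the decisive ingredient to be an invariant maintained at every stage of the construction: after removing the chains $C_1,\dots,C_{i-1}$, the remaining set $L-L'$ should retain a self-dual (or, in the top and bottom cases, a once-shifted self-dual) structure compatible with the target centering, so that the lexicographically smallest available element of minimal rank always begins a chain that terminates at the correct height. Formulating this invariant precisely and proving that the lexicographic rule preserves it under the chosen linear extension is where the real work, and the real risk of failure, lies.
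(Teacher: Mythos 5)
This statement is an open conjecture in the paper: the authors explicitly state that, beyond the cases with at most three parts and the family $(k,1,k,1,\ldots,k,1,l)$ handled in~\cite{MSS:ruc}, ``it has not been possible to prove that it always works,'' and they offer no proof. So there is no argument in the paper against which to check yours; the only question is whether your proposal itself constitutes a proof, and it does not. It is a research plan whose every load-bearing step is deferred: you never specify the ``prescribed interleaving'' of the outer segments into the linear extension, never prove the claimed center-translation of lifted chains by $(\be_1+\be_s)/2$ with a $\pm\tfrac12$ correction, and never formulate---let alone verify---the invariant on $L-L'$ that you correctly identify as the crux. You acknowledge this yourself in the final paragraph, which means the proposal ends exactly where the open problem begins.

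Two of the specific ideas also face concrete obstructions worth naming. First, the inductive lifting of an inner LCD of $L(\be')$ to an LCD of $L(\be)$ does not interact cleanly with the greedy rule: the LCD is defined by a global lexicographic greedy process on $L(\be)$, and there is no reason its chains restrict to, or project onto, the chains of the greedy process on $L(\be')$; the conjecture asserts something about the output of \emph{this particular algorithm}, not merely the existence of some centered CD (the latter is the earlier, weaker conjecture in the same subsection). Second, promoting $\Phi$ to a chain decomposition cannot work as stated, because $\Phi$ is only defined for $k\le\min\{\be_1,\be_s\}$ (Theorem~\ref{thm:Phi}) or on the restricted sets $\cIr_k(\be)$ (Theorem~\ref{thm:Phi-anyk}); it does not pair rank $k$ with rank $n-k$ across all ranks, which is what the endpoints of a full SCD/BCD/TCD would require, and even where it is defined there is no argument that the lexicographic chains terminate at $\Phi$-matched ideals. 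The conjecture remains open, and your proposal should be presented as a strategy, not a proof.
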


The difficulty in proving this conjecture is not that it is hard to find such a linear extension.  Indeed, so many linear extensions give a CD of the desired type that it is hard to find a common feature which runs through some subset of them.

\subsection{Distributive lattices}

By the Fundamental Theorem of Finite Distributive Lattices, every distributive lattice $L$ can be obtained as the set of lower order ideals of some poset $P$ ordered by inclusion.  In this case we write $L=L(P)$.  Given 
what has been discussed, the following is a natural question to ask.
\begin{question}
What conditions on a poset $P$ imply that the rank sequence of $L(P)$ satisfies conditions on sequences such as symmetry, unimodality, and so forth?  What conditions on $P$ guarantee that $L(P)$ has an SCD, BCD, or TCD?
\end{question}

\subsection{Rowmotion}

Fences also have connections with dynamical algebraic combinatorics.  Information about this relatively new area of combinatorics can be found in the survey articles of Roby~\cite{rob:dac} or Striker~\cite{str:dac}.
Let $G$ be a group acting on a finite set $S$ with orbits $\cO$.  Consider a {\em statistic} on $S$, which is a map $\st:S\ra\{0,1,2,\ldots\}$. Given a real constant $c$, we say that $\st$ is {\em $c$-mesic} if its average over any orbit $\cO$ is $c$, that is,
$$
\frac{\st\cO}{\#\cO} = c
$$
where $\st\cO = \sum_{x\in\cO} \st x$.

Given any poset $P$, there is a well-studied action called {\em rowmotion} on $L(P)$, viewed as the set of lower order ideals of $P$.  The generator of rowmotion is the map $\rho:L(P)\ra L(P)$ defined as follows.  Given $I\in L(P)$, the antichain $A$ of its maximal elements generates an upper order ideal $U$.  Define $\rho(I)=L(P)-U$.  
Elizalde, Plante, Roby, and Sagan~\cite{EPRS:rf} showed that rowmotion on $L(\be)$ has many interesting properties, but they were unable to resolve the following conjecture.
\begin{conj}
Suppose $k\ge2$ and $\be=(k-1,k,k,\ldots,k,k-1)$ where the number of parts is odd.  For
$I\in L(\be)$, define $\st(I) = \#I$.  Then $\st$ is $n/2$-mesic, where
$n=\#F(\be)$.
\end{conj}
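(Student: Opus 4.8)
The plan is to exploit the large symmetry group available for this particular family and reduce the mesic statement to a statement purely about the dynamics of rowmotion. Write $F=F(\be)$ and $n=\#F$. Since $\be$ is a palindrome with an odd number $s=2\ell+1$ of parts, both its first and its last segment are ascending; consequently the leftmost element $x_1$ is minimal while the rightmost element $x_n$ is maximal, the height profile is anti-palindromic, and the horizontal reflection $\sigma(x_i)=x_{n+1-i}$ is an \emph{order-reversing} involution of $F$ (for an even number of parts the same reflection would instead be order-preserving). First I would record this anti-automorphism explicitly and form the induced involution $\tau$ on $L(\be)$ given by $\tau(I)=F\setminus\sigma(I)$, which satisfies $\#\tau(I)=n-\#I$. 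Because $\sigma$ reverses order, $\tau$ intertwines rowmotion with its inverse, $\tau\rho\tau=\rho^{-1}$, so $\langle\rho,\tau\rangle$ acts dihedrally on $L(\be)$.

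The reduction then reads as follows. If a rowmotion orbit $\cO$ is stable under $\tau$, then $\tau$ pairs each $I\in\cO$ with $\tau(I)\in\cO$, whence
$$
\st\cO=\sum_{I\in\cO}\#I=\tfrac12\sum_{I\in\cO}\bigl(\#I+\#\tau(I)\bigr)=\tfrac{n}{2}\,\#\cO,
$$
which is exactly $n/2$-mesy on $\cO$. Thus the conjecture is equivalent to the dynamical claim that \emph{every} rowmotion orbit is $\tau$-stable, i.e.\ that each orbit meets the symmetric locus of the dihedral action (it contains a $\tau$-fixed, ``self-reflective'' ideal with $\sigma(I)=F\setminus I$, or a $\tau$-fixed covering edge). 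I would prove this using the orbit description of rowmotion on fences from Elizalde, Plante, Roby and Sagan~\cite{EPRS:rf}: translate $\rho$ into the encoding $\dn{\pair{a}{d}}$ of Subsection~\ref{bpf}, in which $\sigma$ acts by the reversal of~\eqref{dnR}, and show that for the uniform parameters $\al_i=\de_i=k$ the involution $\tau$ coincides, on each orbit, with a power of $\rho$ (most cleanly $\tau=\rho^{\,\#\cO/2}$). The special values $\be_1=\be_s=k-1$ and $\be_2=\dots=\be_{s-1}=k$ are what should force this: they make the two ``defects'' at the ends of the fence symmetric and prevent $\tau$ from carrying one orbit onto a genuinely different one.

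The main obstacle is precisely this last step, the orbit-stability of $\tau$. Self-duality alone does not suffice: it only pairs $\cO$ with $\tau(\cO)$ and forces the average of $\#I$ over the \emph{union} $\cO\cup\tau(\cO)$ to be $n/2$; equivalently, it controls only the global average over all of $L(\be)$, and that we already get for free, since Theorem~\ref{heavy}(c)(iii) (applied repeatedly down to the all-$k$ center, which is symmetric by Theorem~\ref{heavy}(a)) shows $r(\be)$ is fully rank symmetric and hence has mean rank $n/2$. The content of the conjecture is therefore that no orbit is exchanged with a distinct one, which must fail for general palindromic fences and hold only under the stated hypotheses. If the direct orbit analysis proves intractable, the alternative I would pursue is the toggleability route: for each $p\in F$ let $T_p^+(I)$ (resp.\ $T_p^-(I)$) indicate that $p$ is a minimal element of $F\setminus I$ (resp.\ a maximal element of $I$), so that $\sum_p(T_p^+-T_p^-)$ is automatically $0$-mesic, as each element is toggled into and out of an ideal equally often around an orbit. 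One then seeks integer coefficients $\gamma_p$, symmetric under $\sigma$, with
$$
\#I-\frac n2=\sum_{p\in F}\gamma_p\bigl(T_p^+(I)-T_p^-(I)\bigr)
$$
as an identity of functions on $L(\be)$; its existence yields the homomesy immediately, with no reference to orbit structure. Establishing that such a recombination exists for exactly the compositions $(k-1,k,\dots,k,k-1)$, and not in general, would be the crux of this second approach.
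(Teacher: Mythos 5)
This statement is one of the open conjectures in Section~\ref{coq}; the paper gives no proof of it (it is attributed to~\cite{EPRS:rf} as unresolved), so there is nothing to compare your argument against except its own internal completeness --- and as you yourself acknowledge, it is not complete. The setup is sound: for $\be=(k-1,k,\ldots,k,k-1)$ with an odd number of parts the edge sequence of the Hasse diagram is a palindrome, so $\sigma(x_i)=x_{n+1-i}$ is indeed an order-reversing involution, $\tau(I)=F\setminus\sigma(I)$ is a complement-cardinality involution on $L(\be)$, and the averaging computation on a $\tau$-stable orbit is correct. Your observation that rank symmetry of $r(\be)$ (which follows from Theorem~\ref{heavy}(a) and repeated use of (c)(iii)) only controls the average over $\cO\cup\tau(\cO)$ is exactly the right diagnosis of where the difficulty lives.

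The genuine gap is the step you flag as the crux and then do not carry out: showing that every rowmotion orbit is $\tau$-stable (or, in the alternative route, that coefficients $\gamma_p$ exist writing $\#I-n/2$ as a combination of toggleability statistics). Neither is established, and neither is routine --- the guess $\tau=\rho^{\#\cO/2}$ already presupposes all orbits have even length, which is not justified, and the special role of the end parts $k-1$ versus the interior parts $k$ enters nowhere in your argument except as a heuristic. One smaller inaccuracy: orbit-stability of $\tau$ is \emph{sufficient} for the homomesy but not equivalent to it, since two orbits exchanged by $\tau$ could each separately have average $n/2$; so even a disproof of orbit-stability would not settle the conjecture. As written, the proposal reduces the conjecture to a different unproven dynamical statement rather than proving it, and the conjecture should be regarded as still open.
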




\end{document}